\documentclass[12pt,a4paper]{amsart}
\usepackage{geometry}
\usepackage{tabls}
\usepackage{url}
\usepackage[utf8]{inputenc}
\usepackage{amsfonts}
\usepackage{amssymb}
\usepackage{mathrsfs}
\usepackage{amsmath}
\usepackage{amsthm}
\usepackage{amscd}
\usepackage{fancyhdr}
\usepackage{enumerate}
\usepackage{paralist}
\usepackage{graphicx}
\usepackage{cite}

\usepackage{tikz}
\usetikzlibrary{decorations.pathreplacing}

\usepackage{footmisc}
\numberwithin{equation}{section}

\theoremstyle{plain}

\newtheorem{Thm}[equation]{Theorem}
\newtheorem{lem}[equation]{Lemma}
\newtheorem{prop}[equation]{Proposition}
\newtheorem{rem}[equation]{Remark}

\newtheorem{conj}[equation]{Conjecture}
\newtheorem{cond}[equation]{Condition}

\begin{document}

\title{ Arithmetic sums and products of infinite  multiple zeta-star values }

\author{Jiangtao Li}

\email{lijiangtao@csu.edu.cn}
\address{Jiangtao Li \\ School of Mathematics and Statistics, HNP-LAMA, Central South University, Hunan Province, China}

\author{Siyu Yang*}

\email{siyuyang@csu.edu.cn}
\address{Siyu Yang \\ School of Mathematics and Statistics, HNP-LAMA, Central South University, Hunan Province, China  }

\begin{abstract}
  Multiple zeta-star values are variants of multiple zeta values which allow equality in the definition. Similar to the theory of continued fractions, every real number greater than $1$ can be naturally realized as a unique infinite multiple zeta-star value. In this paper, we investigate the arithmetic sums and products of infinite multiple zeta-star values with restricted indices. Moreover, inspired by the theory of continued fractions and Cantor set, we propose a series of conjectures concerning the algebraic points and arithmetic sums and products of infinite multiple zeta-star values with certain indices.     
         \end{abstract}

\let\thefootnote\relax\footnotetext{
2020 $\mathnormal{Mathematics} \;\mathnormal{Subject}\;\mathnormal{Classification}$. 11M32,11Y65.\\
$\mathnormal{Keywords:}$  Multiple zeta-star values, Continued fractions. \\
Project funded by the National Natural Science Foundation of China (Grant No. 12571009) and the
Natural Science Foundation of Hunan Province,
China (Grant No. 2026JJ40003).\\
*Corresponding author }

\maketitle

\section{Introduction}

There are multiple zeta values $$ \zeta(k_1, \cdots, k_r) = \sum_{n_1 > \cdots > n_r \geq 1} \frac{1}{n_1^{k_1} \cdots n_r^{k_r}}, \\\ k_1 \geq 2, k_2, \cdots, k_r \geq 1 $$ and multiple zeta-star values $$ \zeta^\star(k_1, \cdots, k_r) = \sum_{n_1 \geq \cdots \geq n_r \geq 1} \frac{1}{n_1^{k_1} \cdots n_r^{k_r}}, \\\ k_1 \geq 2, k_2, \cdots, k_r \geq 1. $$

In this paper, we use the notations from \cite{lit, lid,lir}. For $ \eta $, it is a map from infinite sequences to real numbers: \[
\eta: \mathcal{T}\rightarrow (1,+\infty),
\]
$$  (k_1,\cdots, k_r,\cdots) \mapsto \mathop{\mathrm{lim}}_{r\rightarrow +\infty}\zeta^\star(k_1, \cdots, k_r) =   \sum_{n_1 \geq \cdots \geq n_r \geq \cdots \geq 1} \frac{1}{n_1^{k_1} \cdots n_r^{k_r} \cdots } \; . $$ 
Here  \[
 \mathcal{T}=\Big{\{} (k_1, \cdots, k_r,\cdots)\,\Big{|}\,k_1\geq 2,k_i\geq 1, i\geq 2, k_s\geq 2 \;for \; some \; s\geq 2 \;if \;k_1=2  , k_i\in\mathbb{N}    \Big{\}}.
 \]
 The map $\eta$ is bijective.
 We call the map $\eta$ zeta-star correspondence. 
 The order structure of multiple zeta-star values and the map $\eta$ were discovered by the first author \cite{lit} and Hirose-Murahara-Onozuka \cite{hmo} independently. Additionally, in \cite{hmo},  Hirose, Murahara and Onozuka computed several formulas for the multiple zeta-star values and studied differentiability.  For convenience we write
 \[
 \eta\left( (k_1,\cdots, k_r,\cdots  )  \right)=\zeta^\star(k_1,\cdots,k_r,\cdots).
 \]
 By the zeta-star correspondence, every real number which is greater than $1$ can be viewed as an {\bf unique infinite  multiple zeta-star values}.

What is the set of all infinite multiple zeta-star values whose indices do not exceed $q$? For $p,q\geq 2$, denote $$\mathcal{D}_q =\big{\{} (k_1, \cdots, k_r,\cdots)\in \mathcal{T} \; \big{|} \;  k_i\leq q, i\geq 1  \big{\}},$$
$$     \mathcal{T}_p=\big{\{} (k_1, \cdots, k_r,\cdots)\in \mathcal{T} \; \big{|} \;  k_i\geq p, i\geq 1  \big{\}}.         $$
Denote by $m$  the Lebesgue measure of $\mathbb{R}$.  In \cite{lit}, the  first author proved that:\\
$(i)$ The set $\eta(\mathcal{D}_q )$ is a closed set of $\mathbb{R}$ and $m(  \eta(\mathcal{D}_q )  )=0$, $q\geq 2$;\\
$(ii)$ The Hausdorff dimension of $\eta(      \mathcal{T}_p   )$ is
\[
  \frac{\mathrm{log}\;\alpha_p}{\mathrm{log}\;2}, \forall\,p\geq 2.
 \]
Here for  $p\geq 2$, $\alpha_p      $  is the unique root of the equation 
 $x^{p-1}(x-1)=1$
 which satisfies $\alpha_p\in (1,2)$.

Any real number $x$ can be expressed as a continued fraction: $$ x= u_0 + \frac{1}{u_1 + \frac{1}{ u_2+ \cdots } } := (u_0; u_1, u_2, \cdots), \; u_0\in\mathbb{Z}, u_1,\cdots, u_r,\cdots \in \mathbb{N} . $$ If $x\in \mathbb{Q}$, this expression terminates at finite steps.
Here we call  $u_0$
the integer part of $x$ and $u_1, u_2, \ldots $ the partial quotients of $x$.  For convenience, we denote $$ F(m) = \big{\{ } (u_0; u_1, u_2, \cdots) \; \big{|} \; 1 \leq u_i \leq m, i \geq 1 \big{\}}, $$ 
$$ G(n) = \big{\{}  (u_0; u_1, u_2, \cdots) \; \big{|} \; u_i \geq n, i \geq 1 \big{\}} \cup \bigcup_{r\geq 1} (u_0; u_1,  \cdots, u_r) \; \big{|} \; u_i \geq n, r\geq i \geq 1 \big{\}} .$$
In the definition of $F(m)$, we only allow infinite continued fractions. While in the definition of $G(n)$, finite continued fractions are also allowed.

In 1946, Hall \cite{h} proved that any real number can be expressed as the sum of real numbers whose fractional parts have continued fraction partial quotients bounded by 4, and any number greater than 1 can be expressed as the product of real numbers with the same property. That is, $$ F(4)+F(4)= \mathbb{R}, \\\ F(4) \cdot F(4)\supseteq [1, + \infty). $$ He adopted the idea of the Cantor set to process step by step the partial quotients $ u_1, u_2, \ldots $ of continued fractions, thereby obtaining the target set $F(4)$. Combining this with the conditions under which sumsets of Cantor sets form an interval, he arrived at the conclusion. In Section 3, we partially describe this ingenious technique.

Furthermore, in 1971, Cusick \cite{cus}  proved that $$G(2) + G(2) = \mathbb{R} , \\\ G(2) \cdot G(2) \supseteq [1, + \infty) $$ via the  alternating order structure of continued fractions. Moreover, this is optimal in a certain sense. For $k \geq 3$, neither the set $G(k)$ of numbers whose partial quotients are at least $k$, nor the other set whose partial quotients are at least 2 but bounded above, can cover $  \mathbb{R} $ when summed with itself.

In 1999, Astels \cite{ast} extended Hall's theorem and summarized the 1975 results of James Hlavka \cite{hla}: the equations $$ F(m)+F(n)= \mathbb{R}, \\\ F(m)-F(n)= \mathbb{R} $$ hold if $(m,n)$ equals $(2,5)$ or $(3,4)$. Neither of the above equations holds if $(m,n)$ equals $(2,4).$ Additionally, $ F(3)+F(3) \neq \mathbb{R} $ and $ F(3)-F(3) = \mathbb{R}. $  In addition, Astels employed the thickness to show that for two positive integer sets $B_1$ and $B_2$, if $ t(B_1)t(B_2) \geq 1$, then $$ \epsilon_1 F(B_1) + \epsilon_2 F(B_2) = \mathbb{R}. $$ Here $t(B_1)$ means the thickness of $B_1$, $\epsilon_1, \epsilon_2$ are each one of $\{1,-1\}$, and $ F(B_1) $ means the set of continued fractions with partial quotients belonging to $B_1$ except $u_0$.

One can compare the theory of continued fractions with the theory of zeta-star correspondence. Inspired by the above results of continued fractions, we prove that:
\begin{Thm}\label{low}For $q\geq 2$, we have: \\
$(i)$ 
\[\eta(\mathcal{D}_q)+\eta(\mathcal{D}_q)=[c_q, +\infty).\]
Here 
\[
c_q=2\zeta^\star(\{q\}^{\infty})=   2\prod_{n\geq 2}\frac{1}{1-\frac{1}{n^q}}     .
\]
$(ii)$ 
\[\eta(\mathcal{D}_q)\cdot \eta(\mathcal{D}_q)=[d_q, +\infty).\]
Here 
\[
d_q=\left(\zeta^\star(\{q\}^{\infty})\right)^2=   \prod_{n\geq 2}\frac{1}{\left(1-\frac{1}{n^q}\right)^2}     .
\]
$(iii)$ \[\eta(\mathcal{D}_q)-\eta(\mathcal{D}_q)=\mathbb{R}.\]
$(iv)$ \[\eta(\mathcal{D}_q)/\eta(\mathcal{D}_q)=(0,+\infty).\]\end{Thm}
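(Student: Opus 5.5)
The approach follows Hall's method. I will realize $\eta(\mathcal{D}_q)$ as a Cantor-type set built from a nested family of intervals and show that at each stage the gaps are small compared to the neighbouring intervals. Concretely, for a finite admissible prefix $\mathbf{k}=(k_1,\dots,k_r)$ with $k_i\le q$, let $I(\mathbf{k})$ be the set of values $\zeta^\star(k_1,\dots,k_r,\dots)$ over all continuations with indices in $[1,q]$. By the order structure of $\eta$ (monotone in each index with alternating sign pattern, as in \cite{lit}), $I(\mathbf{k})$ is contained in the interval whose endpoints are attained by the two extremal tails. Those tails come from combinations of $\{1\}^\infty$-type and $\{q\}^\infty$-type continuations, with the admissibility condition requiring some index $\ge2$ eventually.

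The children $I(\mathbf{k},j)$, $1\le j\le q$, are linearly ordered. The key lemma is a Newhouse-type thickness estimate: each gap between consecutive children of $I(\mathbf{k})$ has length at most the length of each adjacent child, so thickness $\tau\ge 1$. I would verify this by writing $\zeta^\star(\mathbf{k},j,\dots)$ as $\sum_{n}$ of a nested tail and comparing consecutive $j$ explicitly. The worst case should be the pair $j=q-1,q$, or the unbounded end coming from $k_1$ large. Here the lengths are governed by ratios like $\zeta^\star(\{q\}^\infty)-1$ versus the gap, and I expect a uniform bound independent of depth via a scaling (self-similarity-up-to-distortion) argument.

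Given $\tau\ge1$, the Newhouse gap lemma, or Hall's elementary interval-halving argument, shows that $K+K$ is an interval whenever $K$ is a compact Cantor set of thickness $\ge1$ and no gap of one exceeds the other's diameter. For (i) I will treat $\eta(\mathcal{D}_q)\cap[a,b]$ on blocks indexed by $k_1$. Every value is at least $\zeta^\star(\{q\}^\infty)$, because increasing indices decreases value and the minimum over $\mathcal{D}_q$ is the all-$q$ sequence. The blocks for consecutive $k_1$ overlap or abut (again via the gap estimate), and the set is unbounded above: $k_1=2$ with a long string of $1$'s gives arbitrarily large values. Hence the sum set is exactly $[c_q,+\infty)$, and (iii) follows similarly, since differences cover a neighbourhood of $0$ and are unbounded in both directions. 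For products (ii),(iv) I will pass to logarithms. $\log$ is a smooth monotone map with bounded distortion on each compact block, so thickness is preserved up to a factor tending to $1$ on small subintervals. Hall's trick is to apply the gap lemma to sufficiently deep sub-Cantor sets where the distortion is negligible, giving $\log\eta+\log\eta=[\log d_q,\infty)$ and $\log\eta-\log\eta=\mathbb{R}$.

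The main obstacle is the thickness lemma with constant exactly $\ge1$ uniformly in depth, especially for small $q$ such as $q=2$. There the children are few and the admissibility constraint ($k_1=2$ needs a later index $\ge2$) removes endpoints. My first attempt would be to compute the children lengths and gaps explicitly via the product formula $\zeta^\star(\{q\}^\infty)=\prod_{n\ge2}(1-n^{-q})^{-1}$ and its generalizations for eventually periodic tails. The second step would reduce the general depth to depth one by showing that gap/child ratios are controlled by ratios at the first level, using monotonicity of the tail in $n$.
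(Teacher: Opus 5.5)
\textbf{There is a genuine gap: the key lemma is only announced, and the fallback you propose for it cannot work.} Your overall plan (Hall-type subdivision, comparing each gap with its neighbouring pieces, logarithms for products) is right, and it is the paper's. Your first idea, writing each endpoint as a sum over the innermost variable and comparing, also points the right way. But you never carry it out or identify the inequality that closes it. Your fallback (scaling, ``self-similarity up to distortion'', reduction to depth one) fails because Hall's condition here has \emph{no slack}: for $q=2$, at every node the gap equals the adjacent lower piece exactly. For example, at the node $(2)$ the lower child is $I(2,2)=[2,2\zeta(3)]$, and the gap between it and $I(2,1)$ is $(2\zeta(3),4\zeta(3)-2)$. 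Both have length $2\zeta(3)-2$, so any argument that loses a factor $1+o(1)$ fails.

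\textbf{The missing idea is to factor out the prefix.} Set $w_{\mathbf k}(n)=\sum_{n_1\ge\cdots\ge n_r\ge n}n_1^{-k_1}\cdots n_r^{-k_r}$ and $F_n(q)=\prod_{m=2}^{n}(1-m^{-q})^{-1}$, which is the sum over a $\{q\}^\infty$ tail bounded by $n$. Then $\zeta^\star(\mathbf k,j,\{1\}^\infty)=\sum_n w_{\mathbf k}(n)\,n^{1-j}$ and $\zeta^\star(\mathbf k,j,\{q\}^\infty)=\sum_n w_{\mathbf k}(n)\,n^{-j}F_n(q)$. So every gap and piece at node $\mathbf k$ is a sum against the same nonnegative weight $w_{\mathbf k}(n)n^{-j}$, and Hall's inequalities reduce to pointwise inequalities in $n$:
\begin{itemize}
\item gap versus upper piece: $2F_n(q)\le n+1$;
\item gap versus adjacent lower piece: $F_n(q)\le 2n/(n+1)$, or $F_n(q)(n^{-j}+n^{-q})\le 2n^{-j}$ against the whole lower block, as in the paper.
\end{itemize}
Both follow from $F_n(q)\le F_n(2)=2n/(n+1)$, which telescopes, and the result is automatically uniform in depth. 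This is the paper's proof.

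\textbf{The same lack of slack breaks your treatment of (ii) and (iv).} A bounded-distortion estimate for $\log$ gives only ratio $\le 1+\varepsilon$, which is useless when the linear ratio is exactly $1$. The condition must also hold at every node, shallow ones included, and the top pieces $I(2,\{1\}^r)$ are unbounded. Knowing that sums of deep pieces are intervals does not show that those intervals overlap; that overlap \emph{is} the shallow-level condition. What is needed are exact log-scale inequalities for the endpoints of each split. Write $x_1=\zeta^\star(\mathbf k,\{q\}^\infty)$, $x_2=\zeta^\star(\mathbf k,i+1,\{1\}^\infty)$, $x_3=\zeta^\star(\mathbf k,i,\{q\}^\infty)$, $x_4=\zeta^\star(\mathbf k,i,\{1\}^\infty)$, so that $x_1<x_2<x_3<x_4$. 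The required inequalities are $x_3^2\le x_2x_4$ and $x_1x_3\le x_2^2$. The paper derives them from the same weighted representation by Cauchy--Schwarz and AM--GM. For $q=2$, $x_2$ is the midpoint of $[x_1,x_3]$, so the second inequality is just concavity of $\log$.

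\textbf{Two smaller errors.}
\begin{itemize}
\item The order structure is not alternating as it is for continued fractions. By Rule~2, a smaller index at the first difference always gives the larger value. So the extreme continuations are simply $\{1\}^\infty$ (maximum) and $\{q\}^\infty$ (minimum).
\item The unbounded end comes from $(2,\{1\}^r,\dots)$, not from large $k_1$; larger $k_1$ gives smaller values.
\end{itemize}
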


For $q=2$, one has 
\[
\zeta^\star(\{2\}^{\infty})=   \prod_{n\geq 2}\frac{1}{1-\frac{1}{n^2}} =2.  \]
Thus $c_2=d_2=4$.
Roughly speaking, every real number  $x$ which is bigger than $4$  can be realized as  the sum (product)  of two infinite  multiple zeta-star values of all indices $\leq 2$, 
 i. e. 
 \[
 x=\mathop{\mathrm{lim}}_{r\rightarrow +\infty}\left[ \zeta^\star(k_1,\cdots, k_r)+ \zeta^\star(l_1,\cdots, l_r)\right]
 \]
  \[
 \left(x=\mathop{\mathrm{lim}}_{r\rightarrow +\infty}\left[ \zeta^\star(k_1,\cdots, k_r)\cdot \zeta^\star(l_1,\cdots, l_r)\right]\right)
 \]
 for some $(k_1,\cdots, k_r,\cdots), (l_1,\cdots, l_r,\cdots )\in \mathcal{D}_2$.

 We will review the order structure of multiple zeta-star values in Section 2. After a brief introduction, Hall's approach based on the Cantor  set can be effectively adapted to the case where the target set is the set of binary expressions. That is: 
 \[
 \tau: \widehat{\mathcal{T}} \rightarrow (0,1],
 \]
 $$ (k_1,\cdots, k_r,\cdots)\mapsto \frac{1}{2^{k_1}}+\frac{1}{2^{k_1+k_2}}+\cdots +\frac{1}{2^{k_1+\cdots+k_r}}+\cdots .$$ Here 
 \[
  \widehat{\mathcal{T}}=\big{\{} (k_1,\cdots, k_r,\cdots)\;\big{|}\; k_r\in \mathbb{N}, k_r\geq 1, r\geq 1   \big{\}}.
 \]
  As the maps $\eta$ and $\tau$ have the same order structure, following the same idea, we subdivide and compute the infinite  multiple zeta-star values.  The product case can be derived from the sum case by taking logarithms. 
  
In fact, from \cite{new} we have Newhouse's thickness theorem. This is a more general method for obtaining sum sets of Cantor sets. Indeed, one might have used it to prove Theorem 1.1 as well. Can a similar result be obtained for the set of infinite multiple zeta-star values with $k_i$ bounded below by applying this theorem? Unfortunately, we can prove that:
  \begin{Thm}\label{grea}For $p\geq 2$, define 
\[
 \overline{\eta(\mathcal{T}_p)} 
\]
is the closure of $\eta(\mathcal{T}_p)$ in $\mathbb{R}$. 
In fact it is easy to check that
\[
 \overline{\eta(\mathcal{T}_p)} =\eta(\mathcal{T}_p) \bigcup \big{\{} \zeta^\star(k_1,\cdots, k_r)\in \mathcal{T}\;\big{|}\;    k_i\geq p, r\geq 1     \big{\}} \bigcup \big{\{}1\big{\}}.
\]
$(i)$ 
\[
\overline{\eta(\mathcal{T}_p)}+\overline{\eta(\mathcal{T}_p)}  \]
is not a closed interval for any $p\geq 2$.\\
$(ii)$ 
\[
  \overline{\eta(\mathcal{T}_p)}\cdot \overline{\eta(\mathcal{T}_p)}  \]
  is not a closed interval for any $p\geq 2$.\\
  
  \end{Thm}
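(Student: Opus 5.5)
Since $K_p:=\overline{\eta(\mathcal{T}_p)}$ is compact, both $K_p+K_p$ and $K_p\cdot K_p$ are compact. So it suffices to exhibit one point strictly between the minimum and the maximum of the sumset (resp. product set) that is not attained. Throughout I use the order structure of $\eta$ recalled in Section 2 (the same as that of $\tau$). From it, $\min K_p=1$ and $\max K_p=M_p:=\zeta^\star(\{p\}^{\infty})$. Hence if $K_p+K_p$ were a closed interval it would equal $[2,2M_p]$, and if $K_p\cdot K_p$ were one it would equal $[1,M_p^2]$. Part (ii) should follow from the same gap argument after taking logarithms, since $\log$ is a monotone diffeomorphism on the relevant range and distorts small neighbourhoods of $\log M_p$ only by a bounded factor. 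I therefore concentrate on (i).

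\emph{Plan: look for a gap near the top.} Write $K_p=M_p-X_p$. I would describe the part of $X_p$ close to $0$ through the first depth at which an index differs from $p$. Let $J_r$ be the set of deficits $M_p-\zeta^\star(\{p\}^{r},k_{r+1},\dots)$ with $k_{r+1}\ge p+1$, including the finite tuples from the description of the closure. Then $X_p=\{0\}\cup\bigcup_{r\ge0}J_r$.

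Using the explicit sum defining $\zeta^\star$, I would show that each $J_r$ lies in an interval $[a_r,b_r]$. The binary model $\tau$ predicts $a_r\approx \tfrac12 T_r$ and $b_r\approx T_r$, where $T_r$ is the tail contribution after depth $r$. One then has $a_{r+1}/a_r$ and $b_{r+1}/b_r$ roughly $\theta_p<1$, with $\theta_p\approx 2^{-p}$ in the model. For the true $\zeta^\star$ I would make this precise by writing
\[
\zeta^\star(\{p\}^{r},k_{r+1},\dots)=\sum_{n_1\ge\dots\ge n_r}\frac{1}{(n_1\cdots n_r)^{p}}\,\zeta^\star_{\le n_r}(k_{r+1},\dots),
\]
where $\zeta^\star_{\le N}$ denotes the truncated star sum with all summation indices at most $N$. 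The deficit is then a positive combination of quantities like $\zeta^\star_{\le N}(\{p\}^\infty)-\zeta^\star_{\le N}(k_{r+1},\dots)$, which are bounded above and below by explicit constants times the $N=2$ term.

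Next, $(X_p+X_p)\cap[0,a_R)\subseteq\bigcup_{r,s\ge R+1}(J_r+J_s)\cup\bigcup_{r\ge R+1}J_r$ (including the $0$ summand), and this union lies in $[0,2b_{R+1}]$. Also $0+J_R\ge a_R$. So whenever $2b_{R+1}<a_R$ the open interval $(2M_p-a_R,\,2M_p-2b_{R+1})$ misses $K_p+K_p$. In the model this condition reads $2\cdot 2^{-p}<\tfrac12$, i.e.\ $p\ge3$, so for $p\ge3$ the crude estimates should suffice once the constants are checked for the actual $\eta$.

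\emph{Main obstacle: $p=2$.} Here the crude intervals overlap, and one must use the internal gaps of $J_r$. These come from the two possible cases, $k_{r+1}=p+1$ versus $k_{r+1}\ge p+2$, and from the next index. I would first try to refine $J_r$ into two subclusters, $[a_r,a_r']\cup[b_r',b_r]$, and check numerically-rigorous inequalities at $r=0$ with $M_2=2$. Concretely, I would compute $\zeta^\star(3,\{2\}^\infty)$, $\zeta^\star(2,3,\{2\}^\infty)$ and similar values exactly, for example via the product formula $\zeta^\star(\{2\}^\infty)=\prod_{n\ge2}(1-n^{-2})^{-1}$ and its truncated analogues. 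The aim is to find an explicit short interval just below $4$ that no pair of these subclusters can sum into. If no gap appears near the top, I would run the symmetric analysis near the bottom point $2$, using clusters $\approx 1+2^{-k}$. The same case analysis at small depth should then produce an explicit uncovered point.
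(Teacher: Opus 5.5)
There is a genuine gap. The proposal never exhibits an uncovered point for $p=2$, which is the substantive case, and the crude version of your primary near-the-top route provably cannot produce one. With your $a_R,b_R$, combine $\frac{1}{m^3}\cdot\frac{2m}{m+1}=\frac{2}{m^2}-\frac{2}{m(m+1)}$ with $\sum_{m\le N}\frac{2}{m(m+1)}=\frac{2N}{N+1}=\prod_{n=2}^{N}(1-n^{-2})^{-1}$. This gives $\zeta^\star(\{2\}^R,3,\{2\}^\infty)=2\zeta^\star(\{2\}^{R+1})-2$, i.e.\ $a_R=2b_{R+1}$ \emph{exactly} for every $R\ge 0$. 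So the top clusters touch at every depth, and your interval $(2M_2-a_R,\,2M_2-2b_{R+1})$ is always empty.

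The missing step is the first-level comparison the paper makes. Take $U_1=[1,\zeta^\star(3,\{2\}^\infty)]=[1,2\zeta(2)-2]$ (all $k_1\ge 3$) and $U_2=[\zeta(2),2]$ ($k_1=2$). Then $K_2+K_2\subseteq[2,4\zeta(2)-4]\cup[1+\zeta(2),4]$, and $4\zeta(2)-4<1+\zeta(2)$ because $\zeta(2)<5/3$. Since $2$ and $4$ are attained, (i) follows for $p=2$. In your deficit language this is a gap between $J_0+J_0$ and $J_0+(\{0\}\cup\bigcup_{r\ge1}J_r)$, namely $2a_0>b_0+b_1$. Your inclusion only looks at deficits below $a_R$, so it never examines a gap of this type. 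Your fallback near $2$ would reach it only once you include the cluster $k_1=2$. You would then compare $1+\zeta(2)$, the least sum involving $k_1=2$, with $2\zeta^\star(3,\{2\}^\infty)$, the largest sum with both $k_1\ge 3$. That comparison is the whole proof, and it is absent from the proposal.

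Your derivation of (ii) from (i) is also not valid as stated. By AM--GM, a top-type sum gap ($\max U_1+M_p<2\min U_2$) does imply the corresponding product gap. A bottom-type one ($2\max U_1<1+\min U_2$) does not, and the $p=2$ sum gap above is bottom-type: $(2\zeta(2)-2)^2\approx 1.664>\zeta(2)$, so $U_1U_1$ and $U_1U_2$ overlap. The product gap for $p=2$ sits exactly where the sum clusters touch. $U_1U_2=[\zeta(2),4\zeta(2)-4]$ and $U_2U_2=[\zeta(2)^2,4]$ are separated since $\zeta(2)^2-(4\zeta(2)-4)=(\zeta(2)-2)^2>0$. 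Equivalently, your top scheme run directly in log scale works here, because $M_2(a_R-2b_{R+1})+b_{R+1}^2>0$.

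For $p\ge 3$, your top gap at $R=0$ is the right tool, but you must verify $2(M_p-\zeta(p))<M_p-\zeta^\star(p+1,\{p\}^\infty)$ rather than appeal to the binary model. For $p=3$ this reads roughly $0.067<0.140$. In general the right side is at least $\frac{1}{2(2^p-1)}$, while the left side is at most $2(M_p-1)(\zeta(p)-1)=O(4^{-p})$. After that check, (ii) follows by AM--GM.

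Treating $p\ge 3$ separately is genuinely necessary. The paper's closing appeal to $\overline{\eta(\mathcal{T}_p)}\subseteq\overline{\eta(\mathcal{T}_2)}$ does not transfer a gap. For $p\ge 3$ the whole sumset lies in $[2,2M_3]$ with $2M_3\approx 2.47<4\zeta(2)-4$, and likewise $M_3^2<4\zeta(2)-4$ for products.
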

  
Theorem \ref{low} and Theorem \ref{grea} provide partial results for the questions proposed by the first author in Remark $4.4$, \cite{lit}.
 Although Theorem \ref{low} is similar to the case of continued fractions, Theorem \ref{grea} is quite different from the case of continued fractions.
The Newhouse Thickness Theorem states that the arithmetic sum 
$$ C_1+C_2= \{x_1+x_2 \; \big{|} \; x_1 \in C_1,x_2 \in C_2 \} $$ is an interval if $ t(C_1) \cdot t(C_2) \geq 1. $ Here the Cantor sets satisfy that the maximum gap of $C_1$ does not exceed the diameter of $C_2$ and conversely. One can apply the Newhouse Thickness theorem to show that 
\[
\overline{\tau(\mathcal{T}_2)}+\overline{\tau(\mathcal{T}_2)}  
\]
is a closed interval. Here $\overline{\tau(\mathcal{T}_2)}$ is the closure of ${\tau(\mathcal{T}_2)} $.
Although the maps $\eta$ and $\tau$ have the same order structure, the two Cantor sets 
$\overline{\eta(\mathcal{T}_p)}$  and  $\overline{\tau(\mathcal{T}_p)}$  behave very differently.

In the theory of continued fractions, Mercat \cite{mer} observed that there exist infinitely many real quadratic fields $\mathbb{Q}[\sqrt{\delta}]$ admitting infinitely many periodic continued fractions with partial quotients $u_i$ bounded above by 2 (i.e. $u_i \in \{1,2 \}$).   McMullen \cite{mcm} conjectures that in any real quadratic field $\mathbb{Q}(\sqrt{\delta})$, there exist infinitely many periodic continued fractions whose partial quotients take only the integer 1 and 2.  

 For the theory of multiple zeta-star values, after extensive calculations, we believe that the following conjecture is true.
  \begin{conj}
  Denote by $\overline{\mathbb{Q}}$ the set of algebraic numbers in $\mathbb{C}$. Then
 \[ \eta(\mathcal{D}_2)\bigcap\; \overline{\mathbb{Q}} \;=\big{\{}  n\;\big{|}\; n\in \mathbb{Z}, n\geq 2             \big{\}}  .\]
  \end{conj}
    Furthermore, one may have:
  \begin{conj}
	Denote by $\overline{\mathbb{Q}}$ the set of algebraic numbers in $\mathbb{C}$. Then
	\[ \eta(\mathcal{D}_q)\bigcap\; (\overline{\mathbb{Q}} \setminus \mathbb{Z} )\;=  \varnothing  ,\;\; q\geq 2.\]
  \end{conj}

  Based on a thorough analysis of Theorem 1.2, we conjecture:
  \begin{conj} For $p\geq 2$ and any $a,b \in \mathbb{R}$, one has
  \[
  (a, b) \nsubseteq \overline{\eta(\mathcal{T}_p)}+\overline{\eta(\mathcal{T}_p)} . \]
  In other words, the set \[\overline{\eta(\mathcal{T}_p)}+\overline{\eta(\mathcal{T}_p)}\]
  does not have any interior points for any $p\geq 2$.   
  \end{conj}

There is a Palis conjecture (1983) which was formally proposed in \cite{pal}. Specifically, Palis conjectured that for generic one-dimensional regular Cantor sets $C_1, C_2$, if the sum of their Hausdorff dimensions exceeds 1, then their arithmetic difference $$C_1 - C_2 = \{x - y : x \in C_1,\; y \in C_2\} $$ is either a set of Lebesgue measure zero or contains an interval. Inspired by Palis Conjecture and Conjecture 1.5, we propose the following conjecture:
  \begin{conj} Let $m(A)$ denotes the Lebesgue measure of the set $A$, one has \[m  (\; \overline{\eta(\mathcal{T}_2)}+\overline{\eta(\mathcal{T}_2)} \;) = 0.\]   
  \end{conj}
While the Palis conjecture has not yet been completely proved, most of its consequences have already been achieved. In 2001, Moreira and Yoccoz \cite{may} proved Palis' conjecture for non-linear regular Cantor sets: for generic pairs with sum of the Hausdorff dimensions $\operatorname{HD}(K_1)+\operatorname{HD}(K_2)>1$, the arithmetic difference $K_1-K_2$ contains an interval.  In the subclass of homogeneous Cantor sets (with equal contraction ratios and disjoint initial intervals), Takahashi \cite{tak} (2019) proved that if $\operatorname{HD}(K_1)+\operatorname{HD}(K_2)>1$ and the uniform self-similar measures have $L^2$-density, then one can perturb $K_1$ arbitrarily slightly (within the homogeneous class) so that $K_1+K_2$ contains an interval. This provides an affirmative answer to a weaker form of the Palis conjecture in the affine setting. So regarding the Palis conjecture, it is known that the case of nonlinear regular Cantor sets has been fully proved, while the classical affine (self-similar) Cantor set case remains to be fully resolved.

\section{The order structure of multiple zeta-star values}
This section will review the total order structure on the set of multiple zeta-star values. Some special infinite multiple zeta-star values are calculated. The main references are \cite {lid,lit}.

Denote by
\[
S = \{ (k_1, \cdots, k_r) \; \big{|} \; k_1 \geq 2,\; k_2, \cdots, k_r \geq 1, \; r \geq 1 \}.
\]

Define an order \(\succ\) on \(S\) by the following two rules. \\
Rule 1:
\[
(k_1, \cdots, k_r, k_{r+1}) \succ (k_1, \cdots, k_r)
\]
for any $ (k_1, \cdots, k_r, k_{r+1}) \in S.$\\
Rule 2:
\[
(k_1, \cdots, k_r) \succ (m_1, \cdots, m_s)
\]
if the first $i$ terms are equal ( $k_j = m_j$ for $1 \leq j \leq i$ ) while the ($i+1$)-th term differs ( $k_{i+1} < m_{i+1}$ for some $i \geq 0$ ). 

\begin{lem}\label{1inf}
For $(k_1,\cdots, k_r)\in\mathcal{S}$ and $m\geq 1$, one has 
\[
\zeta^\star(k_1,\cdots,k_{r-1}, k_r+1,\{1\}^m)< \zeta^\star(k_1,\cdots,k_{r-1},k_r).
\]
\end{lem}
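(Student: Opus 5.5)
The plan is to compare the two sums term by term after fixing the outer summation indices $n_1\ge\cdots\ge n_r$. Write $\zeta^\star(k_1,\dots,k_r)=\sum_{n_1\ge\cdots\ge n_r\ge1}\prod_{j}n_j^{-k_j}$. On the left, the block of $m$ ones attached after position $r$ contributes the inner sum $\sum_{n_r\ge l_1\ge\cdots\ge l_m\ge 1}\frac{1}{l_1\cdots l_m}$. By the standard identity this equals $\zeta^\star_{n_r}(\{1\}^m)$, i.e. the complete homogeneous symmetric polynomial $h_m(1,\tfrac12,\dots,\tfrac1{n_r})$. So we get
\[
\zeta^\star(k_1,\dots,k_r+1,\{1\}^m)=\sum_{n_1\ge\cdots\ge n_r\ge1}\frac{1}{n_1^{k_1}\cdots n_r^{k_r}}\cdot\frac{h_m(1,\frac12,\dots,\frac1{n_r})}{n_r}.
\]
It would therefore suffice to prove, for every $n\ge1$,
\[
\frac{1}{n}\,h_m\Big(1,\tfrac12,\dots,\tfrac1n\Big)\le 1,
\]
with strict inequality for at least one $n$.

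That pointwise bound is false for large $n$ when $m\ge2$, since $h_m$ grows like $(\log n)^m/m!$ times a constant. So the naive termwise comparison at level $n_r$ alone does not work, and one has to sum more cleverly. Instead I would use the generating function $\sum_{m\ge0}h_m(1,\dots,\frac1n)t^m=\prod_{j=1}^n(1-t/j)^{-1}$. At $t=1$ this diverges, and $\prod_{j=2}^n(1-1/j)^{-1}=n$. Hence $\sum_{m\ge0} h_m(\tfrac12,\dots,\tfrac1n)=n$.

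To handle the variable $1$ as well, split off the variable $l=1$. This gives the exact identity
\[
\sum_{m\ge0}\zeta^\star(k_1,\dots,k_r+1,\{1\}^m)\ \text{(with the appropriate convention)}
\]
This is the "sum over all $m$" version that telescopes to $\zeta^\star(k_1,\dots,k_r)$. More precisely, the known relation (Ohno/duality type, or directly from the product above) is
\[
\sum_{j=0}^{\infty}\Big(\zeta^\star(\dots,k_r+1,\{1\}^j)\text{ restricted to } l_j\ge2\Big)=\zeta^\star(\dots,k_r).
\]
Then a single term with fixed $m$ is strictly smaller than the full sum of positive terms.

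Concretely, the key step is to write
\[
\frac1{n}\prod_{j=2}^{n}\Big(1-\frac1j\Big)^{-1}=1
\]
and expand the product into the nonnegative series $\sum_{m\ge0}\frac1n h_m(\frac12,\dots,\frac1n)$. This gives
\[
\zeta^\star(k_1,\dots,k_r)=\sum_{m\ge0}\sum\frac{1}{n_1^{k_1}\cdots n_r^{k_r+1}}h_m\Big(\tfrac12,\dots,\tfrac1{n_r}\Big).
\]

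The main obstacle is relating $h_m(\frac12,\dots)$ to $h_m(1,\frac12,\dots)$, because the latter includes the variable $1$. Here I would use $h_m(1,x_2,\dots)=\sum_{i\le m}h_i(x_2,\dots)$. Then
\[
\frac1n h_m\Big(1,\tfrac12,\dots,\tfrac1n\Big)=\sum_{i=0}^m\frac1n h_i\Big(\tfrac12,\dots,\tfrac1n\Big)\le\sum_{i\ge0}\frac1n h_i\Big(\tfrac12,\dots,\tfrac1n\Big)=1.
\]
This restores the pointwise bound; I was earlier mistaken, since here the factor $1/n_r$ absorbs the growth. For $n\ge2$ the inequality is strict, because $h_{m+1}(\tfrac12,\dots,\tfrac1n)>0$.

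Summing against the positive weights $\prod_j n_j^{-k_j}$ gives the claimed strict inequality. The only check still needed is that the terms with $n_r\ge2$ are present, which they clearly are.
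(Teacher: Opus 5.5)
Your final argument is correct and is essentially the paper's proof: after factoring out $\prod_j n_j^{-k_j}$, you bound $\frac{1}{n}h_m(1,\frac12,\dots,\frac1n)=\frac1n\sum_{i\le m}h_i(\frac12,\dots,\frac1n)$ by $\frac1n\prod_{j=2}^{n}(1-\frac1j)^{-1}=1$, strictly for $n\ge 2$. Your identity $h_m(1,x_2,\dots)=\sum_{i\le m}h_i(x_2,\dots)$ just makes explicit the step the paper calls ``clear''. You should delete the intermediate detour: the claim that the pointwise bound fails is false, since $(\log n)^m/n\to 0$, and the unfinished ``exact identity'' and ``known relation'' displays are not needed.
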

\noindent{\bf Proof:} We have
\[
\begin{split}
&\;\;\;\; \zeta^\star(k_1,\cdots,k_{r-1}, k_r+1,\{1\}^m)            \\
&=\sum_{n_1\geq \cdots\geq n_r\geq \cdots \geq n_{r+m}\geq 1} \frac{1}{n_1^{k_1}\cdots n_{r-1}^{k_{r-1}} n_r^{k_r+1} n_{r+1}\cdots n_{r+m}}\\
&=\sum_{n_1\geq \cdots\geq n_r\geq  1}\frac{1}{n_1^{k_1}\cdots n_{r-1}^{k_{r-1}} n_r^{k_r}} \cdot \frac{1}{n_r}\left(  \sum_{n_r\geq n_{r+1}\geq \cdots \geq n_{r+m}\geq 1}   \frac{1}{n_{r+1}\cdots n_{r+m}}    \right).
 \end{split}
\]
For $n\geq 2,m\geq 1$, it is clear that
\[
\frac{1}{n}\sum_{n\geq n_1\geq \cdots \geq n_m\geq 1}\frac{1}{n_1\cdots n_m}<\frac{1}{n}\prod_{n_1=2}^n\left(1+\frac{1}{n_1}+\cdots+\frac{1}{n_1^r}+\cdots       \right)=1.
\]
Thus 
\[
\zeta^\star(k_1,\cdots,k_{r-1}, k_r+1,\{1\}^m)< \zeta^\star(k_1,\cdots,k_{r-1},k_r).\]
$\hfill\Box$\\

\begin{prop}\label{ord}
If $(k_1,\cdots, k_r)\succ (l_1,\cdots, l_s)$ for $(k_1,\cdots,k_r),(l_1,\cdots,l_s)\in\mathcal{S}$, then 
\[
\zeta^\star( k_1,\cdots, k_r   )>\zeta^\star(    l_1,\cdots, l_s    ).
\]
\end{prop}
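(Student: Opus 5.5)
The plan is to treat the two rules defining $\succ$ separately, and to reduce Rule 2 to Lemma \ref{1inf} by means of Rule 1 and a monotonicity principle.

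\emph{Rule 1.} Suppose $(k_1,\cdots,k_r)$ extends $(l_1,\cdots,l_s)$, so $r>s$. We compare the two sums directly. The difference
\[
\zeta^\star(k_1,\cdots,k_r)-\zeta^\star(k_1,\cdots,k_s)
\]
consists of all terms of the longer sum with $n_{s+1}\geq 2$. To see this, note that the terms with $n_{s+1}=\cdots=n_r=1$ reproduce the shorter sum exactly. The remaining terms are positive, and there is at least one of them. Hence the strict inequality holds, and by transitivity this also covers appending several entries.

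\emph{Rule 2.} Now suppose $k_j=l_j$ for $j\leq i$ and $k_{i+1}<l_{i+1}$. Put $a=k_{i+1}$ and $\mathbf{c}=(k_1,\cdots,k_i)$. The goal is the chain
\[
\zeta^\star(\mathbf{c},a,k_{i+2},\cdots,k_r)\geq \zeta^\star(\mathbf{c},a) > \zeta^\star(\mathbf{c},a+1,\{1\}^m)\geq \zeta^\star(\mathbf{c},l_{i+1},\cdots,l_s),
\]
with $m$ chosen large.

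\begin{enumerate}
\item The first inequality is Rule 1, or an equality when $r=i+1$.
\item The middle inequality is exactly Lemma \ref{1inf} applied to the word $(\mathbf{c},a)$.
\item For the last inequality we need two monotonicity facts:
\begin{enumerate}
\item Increasing any single index weakly decreases $\zeta^\star$, since every term $n_j^{-k_j}$ weakly decreases. So $\zeta^\star(\mathbf{c},l_{i+1},\cdots)\leq \zeta^\star(\mathbf{c},a+1,l_{i+2},\cdots,l_s)$, because $l_{i+1}\geq a+1$.
\item Next we want $\zeta^\star(\mathbf{c},a+1,l_{i+2},\cdots,l_s)\leq \zeta^\star(\mathbf{c},a+1,\{1\}^{s-i-1})$. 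This again holds termwise, since $l_j\geq 1$.
\end{enumerate}
\item Finally, $\zeta^\star(\mathbf{c},a+1,\{1\}^{s-i-1})\leq \zeta^\star(\mathbf{c},a+1,\{1\}^m)$ for $m\geq s-i-1$, by Rule 1. Taking $m=\max(1,s-i-1)$ then links everything.
\end{enumerate}

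\emph{Edge cases.} When $s=i+1$, step 3(b) is vacuous and we use $m=1$ with Rule 1. When $i=0$, $\mathbf{c}$ is empty; we need $(a)$ with $a\geq 2$, which holds since the word lies in $\mathcal{S}$, so Lemma \ref{1inf} applies. Convergence is automatic because $k_1\geq 2$ for all words involved.

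The only real care point is step 3, which merges the tail $(l_{i+2},\cdots,l_s)$ into a block of $1$'s whose length matches what Lemma \ref{1inf} handles. Termwise comparison plus padding via Rule 1 resolves it, so I expect no serious obstacle.
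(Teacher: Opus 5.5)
Your proof is correct and is essentially the paper's argument: the paper likewise proves the extension case by isolating the terms with $n_{s+1}=1$, and the first-difference case by bounding $\zeta^\star(l_1,\cdots,l_s)$ termwise by $\zeta^\star(k_1,\cdots,k_{i-1},k_i+1,\{1\}^{s-i})$ ($i$ the first differing position), then applying Lemma \ref{1inf} and the extension case. Your padding with extra $1$'s via Rule 1 is a small improvement: it covers the case where the first difference is at the last entry of $(l_1,\cdots,l_s)$, where the paper's direct appeal to Lemma \ref{1inf} would need $m=0$, which the lemma does not allow.
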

\noindent{\bf Proof:}
If $r>s$, then we have 
\[
(k_1,\cdots,k_s)=(l_1,\cdots, l_s)
\]
or 
\[
(k_1,\cdots, k_s)\succ (l_1,\cdots, l_s).
\]
$(i)$ For $(k_1,\cdots,k_s)=(l_1,\cdots, l_s)$, one has
\[
\begin{split}
&\;\;\;\; \zeta^\star(k_1,\cdots, k_r)      \\
&= \sum_{n_1\geq \cdots\geq n_s\geq n_{s+1}\geq \cdots \geq n_r\geq 1}  \frac{1}{n_1^{k_1}\cdots n_s^{k_s}n_{s+1}^{k_{s+1}}\cdots n_r^{k_r}}            \\
&= \left(  \sum_{\substack{n_1\geq \cdots\geq n_s\geq n_{s+1}\geq \cdots \geq n_r\geq 1\\ n_{s+1}=1}}    +  \sum_{\substack{n_1\geq \cdots\geq n_s\geq n_{s+1}\geq \cdots \geq n_r\geq 1\\ n_{s+1}>1}}   \right)       \frac{1}{n_1^{k_1}\cdots n_s^{k_s}n_{s+1}^{k_{s+1}}\cdots n_r^{k_r}}                                \\
&>\sum_{n_1\geq \cdots\geq n_s\geq  1}  \frac{1}{n_1^{k_1}\cdots n_s^{k_s}} =\zeta^\star(l_1,\cdots,l_s).        \end{split}
\]
$(ii)$For $(k_1,\cdots,k_s)\succ (l_1,\cdots, l_s)$, one has
$(k_1,\cdots,k_{i-1})=(l_1,\cdots,l_{i-1}), k_{i}+1\leq l_{i}$ for some $i\leq s$.
\[
\begin{split}
& \;\;\;\; \zeta^\star(l_1,\cdots, l_s)        \\
&= \sum_{n_1\geq \cdots\geq n_s\geq 1}\frac{1}{n_1^{l_1}\cdots n_s^{l_s}}                      \\
&\leq \sum_{n_1\geq \cdots \geq n_s\geq 1}\frac{1}{n_1^{k_1}\cdots n_{i-1}^{k_{i-1}} n_{i}^{k_{i}+1} n_{i+1}\cdots n_s}\\
&< \zeta^\star(k_1,\cdots,k_{i-1}, k_i).\\
\end{split}
\]
Here the second inequality follows from Lemma \ref{1inf}. By $(i)$, one has
\[
\zeta^\star(l_1,\cdots, l_s)<\zeta^\star(k_1,\cdots,k_r).\]
If $r\leq s$, then we have $(k_1,\cdots,k_{i-1})=(l_1,\cdots,l_{i-1}), k_{i}+1\leq l_{i}$ for some $i\leq r$.
 By the same argument as $(ii)$, one has 
\[
\zeta^\star(l_1,\cdots, l_s)<\zeta^\star(k_1,\cdots,k_r).\]
$\hfill\Box$\\

Next, we compute some special infinite multiple zeta-star values for certain sequences. There are: 
\[
\begin{aligned}
&\;\; \zeta^\star ( (k_1,k_2, \ldots, k_r, \{q\}^\infty) )
\\
=&\,\eta\left( (k_1,k_2,\cdots,k_r,\{q\}^{\infty})      \right)   
\\= &\sum_{n_1 \geq n_2 \geq \cdots \geq 1} \frac{1}{n_1^{k_1} \cdots n_r^{k_r} n_{r+1}^q n_{r+2}^q \cdots } 
\\= &\sum_{n_1 \geq \cdots \geq n_r \geq 1} \frac{1}{n_1^{k_1} \cdots n_r^{k_r}} \cdot \sum_{n_r\geq n_{r+1} \geq n_{r+2} \cdots \geq 1} \frac{1}{n_{r+1}^q n_{r+2}^q \cdots } 
\\= & \sum_{n_1 \geq \cdots \geq n_r \geq 1} \frac{1}{n_1^{k_1} \cdots n_r^{k_r}} \prod_{n=2}^{n_{r}} \left(1 + \frac{1}{n^q} + \frac{1}{n^{2q}} + \cdots\right) 
\\= &\sum_{n_1 \geq \cdots \geq n_r \geq 1} \frac{1}{n_1^{k_1} \cdots n_r^{k_r}} \prod_{n=2}^{n_{r}} \left(1 - \frac{1}{n^q}\right)^{-1}.
\end{aligned}
\]
Here we interpret that $\prod_{n=2}^{n_r} \left(1-\frac{1}{n^q} \right)^{-1}=1 $ for $n_r=1$. Additionally, \[\begin{aligned} q=1, \; &\prod_{n=2}^{n_{r}} \left(1 - \frac{1}{n^q}\right)^{-1} = n_r \\ q=2, \; &\prod_{n=2}^{n_{r}} \left(1 - \frac{1}{n^q}\right)^{-1} = \prod_{n=2}^{n_{r}} \left(\frac{(n-1)(n+1)}{n^2}\right)^{-1} = \frac{2n_r}{n_r+1}. \end{aligned}\]  

\begin{rem}
In fact, by the total order structure of multiple zeta-star values, one can show that the zeta-star correspondence map \[
\eta: \mathcal{T}\rightarrow (1, +\infty),
\]
\[
(k_1,\cdots,k_r,\cdots)\mapsto \mathop{\mathrm{lim}}_{r\rightarrow +\infty} \zeta^\star(k_1, \cdots,k_r)
\]
is injective. To show that the map is surjective, one need to show that some multiple series tend to 
 $0$ as $r\rightarrow +\infty$. See \cite{lit} for more details.
\end{rem}

\section{ Arithmetic sums of binary expression    }
In this section, by using the theory of Hall  about the sum of Cantor set, we discuss the arithmetic sums of binary expression. The main reference is Hall \cite{h} .

Denote by 
\[
\widehat{\mathcal{T}}=\bigg{\{}(k_1,\cdots, k_r,\cdots)\;\bigg{|}\; k_r\in \mathbb{N}, k_r\geq 1, r\geq 1\bigg{\}}.
\]
There is a natural bijection
\[
\tau:  \widehat{\mathcal{T}}\rightarrow (0,1],
\]
\[
(k_1,\cdots, k_r,\cdots)\mapsto \frac{1}{2^{k_1}}+\frac{1}{2^{k_1+k_2}}+\cdots +\frac{1}{2^{k_1+\cdots+k_r}}+\cdots .
\]
Define 
\[
B_m=\bigg{\{} (a_n)_{n\geq1}\in \widehat{\mathcal{T}} \;\bigg{|}\; a_n\in\mathbb{N}, 1\leq a_n\leq m\bigg{\}}.
\]
\[
L_m=\bigg{\{} (a_n)_{n\geq1}\in \widehat{\mathcal{T}} \;\bigg{|}\; a_n\in\mathbb{N}, a_n\geq m\bigg{\}}.
\]
Is there some $m\geq 2$ such that 
\[
(c_1, 2) \subseteq \tau(B_m)+\tau(B_m) 
\]
for some $0<c_1<1$? Similarly, is there any $m \geq 2$ such that
\[
(c_2,c_3) \subseteq \tau(L_m)+\tau(L_m) 
\]
for some $ 0< c_2 < c_3 < 1 $?

For $m=2$, we have an interval with a minimum of $ \tau(B_2) $: $$\tau (2,2,2,\ldots)=\frac{1}{3}$$ and a maximum of $\tau(B_2)$: $$\tau (1,1,1,\ldots)=1.$$ By subdividing $[\frac{1}{3},1]$, we can derive a Cantor set that is equal to $\tau (B_2)$. 

For any closed interval $A=[x,x+a]$, we can remove an open subinterval from $A$ write $ A_{12}=(x+a_1,x+a_1+a_{12}) $, leaving two closed intervals $ A_1=[x,x+a_1] $ and $ A_2=[x+a_1+a_{12} ,x+a_1+a_{12}+a_2] $ where $ a=a_1+a_{12}+a_2 $. At each stage, we apply the same subdivision to every remaining interval $A_1$, $A_2$, $\cdots$. The set of points in $A$ that are never removed forms the Cantor set $L(A)$. 

Obviously, $L(A)$ is a perfect set. We also know that $L(A)$ either contains an interval or is nowhere dense. Its Lebesgue measure may be zero or any positive number less than the length of $A$. The precise nature of $L(A)$ depends on the ratios of the lengths of subintervals such as $a_1$, $a_{12}$ and $a_2$.

For any two intervals $ A=[x,x+a] $ and $ B=[y,y+b] $,  define the sum of $A$ and $B$ as the interval $$ A+B=[x+y,x+y+a+b].$$ If $ \alpha \in A $, $ \beta \in B $, write $\gamma = \alpha +\beta $, then $ \gamma \in A+B $ and conversely if we have $ \gamma \in A+B $ there exist $ \alpha \in A $ and $ \beta \in B $ with $ \gamma =\alpha+\beta $. 

Now we can consider the properties of the ``Vector sums'' $ L(A)+L(B) $.  When the sum of Cantor set is the whole interval $A+B$? Let $C$ be any subdividing interval of $A$ with $ C=[u,u+c] $ from which the gap $ C_{12}=(u+c_1,u+c_1+c_{12} )$  is deleted, leaving $ C_{1}=[u,u+c_1] $ and $ C_{2}=[u+c_1+c_{12},u+c_1+c_{12}+c_2] $, where $ c=c_1+c_{12}+c_2 $. (Let $D$ be any subdividing intervals of $B$ with $ D=[v,v+d] $, constructed in the same way as $C$. ) Then by considering the sum $C+C=[2u,2u+2c]$ we can get
\begin{align*}
C_1+C_1 &=[2u,2u+2c_1]\\
C_1+C_2 &=C_2+C_1=[2u+c_1+c_{12},2u+2c_1+c_{12}+c_2]\\
C_2+C_2 &=[2u+2c_1+2c_{12},2u+2c].
\end{align*}

 By comparing the endpoints of the intervals, we easily find that if $ c_{12}>c_1 $ there is a gap between $ C_1+C_1 $ and $ C_1+C_2 $,  while if $ c_{12}>c_2 $ there is a gap between $ C_1+C_2 $ and $ C_2+C_2 $. To avoid the appearance of gaps, we obtain a necessary condition for the sum of Cantor set $ L(A)+L(B) $ to cover the interval $ A+B $. 

\begin{cond}\label{con}(Hall)
The length of $C_{12}$ that is deleted by a subinterval C should not be longer than the length of the interval remained $C_1$ or $C_2$. It is $ c_{12} \leq c_1 $ and $ c_{12} \leq c_2 $. 
\end{cond}

 But this is not enough. Suppose that there are an interval $A$ and an interval $B$ that is divided into three equal parts in the Cantor set construction, denoted as 
 \[
 A=[x,x+a] \quad B=[y,y+b]
 \]
\[
B_1=[y,y+b/3] \quad B_2=[y+2b/3,y+b].
\]
Here $ b_1=b_{12}=b_2 $ and Condition \ref{con} is satisfied. We have
\[
A+B_1=[x+y,x+y+a+b/3]
\]
\[
A+B_2=[x+y+2b/3,x+y+a+b].
\]
Then $ L(A)+L(B) $ is covered by the intervals $A+B_1$ and $A+B_2$. But if $ a<b/3 $ there is a gap between the two intervals 
$A+B_1$ and $A+B_2$. By a slight generalization of this conclusion, we  see that $ \frac{1}{3} \leq a/b \leq 3 $ is also a necessary condition. Combining these two processes, we arrive at the following important theorem. 

\begin{Thm}\label{asu}(Hall)  If the subdivisions of intervals $ A=[x,x+a] $ and $ B=[y,y+b] $ satisfy Condition \ref{con} at all stages and $ \frac{1}{3} \leq a/b \leq 3 $, then $ L(A)+L(B) $ is the entire interval $A+B$. 
\end{Thm}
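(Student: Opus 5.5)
We prove Theorem \ref{asu} by a nested-interval (compactness) argument, using the two hypotheses to keep a pair of subdivision intervals alive at every stage. Fix $\gamma\in A+B$. We will construct sequences of subdivision intervals $A=C^{(0)}\supseteq C^{(1)}\supseteq\cdots$ from the construction of $L(A)$ and $B=D^{(0)}\supseteq D^{(1)}\supseteq\cdots$ from that of $L(B)$. They will satisfy $\gamma\in C^{(n)}+D^{(n)}$ and $\frac13\le |C^{(n)}|/|D^{(n)}|\le 3$ for every $n$, where $|\cdot|$ denotes length. At each step we replace one of the two intervals, namely the longer one, by one of its two children. Since both lengths must then tend to $0$, the nested intersections give points $\alpha\in L(A)$ and $\beta\in L(B)$. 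Because the sums $C^{(n)}+D^{(n)}$ are nested and contain $\gamma$, we get $\gamma=\alpha+\beta$.

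The key step is the following lemma. Let $C=[u,u+c]$ and $D=[v,v+d]$ be subdivision intervals with $c\ge d$ and $c\le 3d$, and let $C$ split as $C_1,C_{12},C_2$ with $c_{12}\le c_1,c_2$. Then:
\begin{enumerate}[(a)]
\item $C+D=(C_1+D)\cup(C_2+D)$;
\item for the child $C_j$ used, $\frac13\le c_j/d\le 3$.
\end{enumerate}
For (a), the two intervals $C_1+D=[u+v,u+v+c_1+d]$ and $C_2+D$ cover $C+D$ exactly when $c_{12}\le d$. We have $d\ge c/3=(c_1+c_{12}+c_2)/3\ge c_{12}$ by Condition \ref{con}. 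For (b), $c_j\le c\le 3d$ gives the upper bound. For the lower bound we need $c_j\ge d/3$, and since $c_j\ge c_{12}$ we have $3c_j\ge c_j+c_{12}+c_{3-j}-(c_{3-j}-c_j)$, which is not by itself enough.

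The lower bound in (b) is therefore the main obstacle. The child may be short if the split is very lopsided: for instance $c_1$ tiny forces $c_{12}\le c_1$ tiny, while $c_2\approx c$. In that case we refine the rule. If $c_1<d/3$, then $c_{12}\le c_1<d/3$, so $C_1+D$ and $C_2+D$ overlap heavily. We then choose $C_2$ whenever $\gamma\in C_2+D$, and $C_2$ is long enough because $c_2=c-c_1-c_{12}\ge d-2d/3$ (using $c\ge d$). The delicate case is when $\gamma$ lies only in $C_1+D$, i.e.\ near the left end. There we instead keep $C$ and subdivide $D$ into $D_1$ and $D_2$, checking that $d$'s children pair admissibly with $C$ or with $C_1$.

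I will first try the cleaner strategy of Hall. Instead of an invariant on ratios, we maintain the condition that the longer of the two current intervals has length at most $3$ times the shorter, and when choosing a child we pick one covering $\gamma$ for which the ratio condition persists. If neither child of $C$ works, we subdivide $D$ instead. To handle this, I plan a short case analysis comparing $c_1$ and $c_2$ with $d/3$, and the relative position of $\gamma$. In each case Condition \ref{con} (that $c_{12}\le\min(c_1,c_2)$, and likewise for $D$) closes the gaps. The argument also needs the lengths to tend to $0$, which holds for the nested Cantor constructions considered here since every subdivision removes a gap and the intervals shrink.
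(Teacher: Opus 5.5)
The paper gives no proof of this theorem beyond citing Hall, so the question is whether your outline closes on its own. It does not. The invariant $\frac13\le |C^{(n)}|/|D^{(n)}|\le 3$ that drives your nested-pair argument cannot be maintained, so the ``short case analysis'' you defer does not exist. Splitting the shorter interval only makes the ratio worse. Splitting the longer one, $C$, preserves the ratio only if the child $C_j$ with $\gamma\in C_j+D$ has $c_j\ge d/3$. Concretely, let $A=B=[0,1]$. Split $A$ as $[0,0.01]$, $(0.01,0.02)$, $[0.02,1]$ and $B$ as $[0,0.1]$, $(0.1,0.2)$, $[0.2,1]$; Condition \ref{con} holds, and you can trisect from then on. For every $\gamma\in[0,0.02)$ we have $\gamma\notin A_2+B$ and $\gamma\notin A+B_2$. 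So the only candidate pairs are $(A_1,B)$, $(A,B_1)$ and $(A_1,B_1)$, with ratios $100$, $10$ and $10$. This is exactly your ``delicate case'', and pairing $D$'s children with $C$ or $C_1$ does not rescue it. Letting the ratio recover after several steps does not help either. Suppose the leftmost intervals of $A$ have lengths $1,10^{-2},10^{-4},\dots$ and those of $B$ have lengths $1,10^{-1},10^{-3},\dots$, each split as left child, an equal gap, then the rest. Take $\gamma=x+y$, the left endpoint of $A+B$. Every pair with $\gamma\in C+D$ then consists of leftmost intervals, and apart from $(A,B)$ all their length ratios are $\ge 10$ or $\le 1/10$. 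Comparing lengths with lengths is the wrong quantity.

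What must be controlled is gaps against lengths. Let $G(C)$ be the largest gap of the construction inside $C$. Remove gaps in decreasing order of size, splitting each current piece at its largest gap $U=E_{12}$. Condition \ref{con} survives this reordering. The part of the piece to the left of $U$ contains either $E_1$, or the right child $F_2$ of the interval $F$ whose gap bounds the piece on the left. That gap was removed earlier, so it is at least as large as $U$. Either way the left part has length $\ge |U|$, and likewise on the right.

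Now keep the invariant $\gamma\in C+D$, $G(C)\le |D|$, $G(D)\le |C|$, and always split the piece with the larger $G$, say $C$. Then $|U|=G(C)\le |D|$ gives $C+D=(C_1+D)\cup(C_2+D)$. The chosen child satisfies $G(C_j)\le G(C)\le |D|$ and $|C_j|\ge |U|\ge G(D)$. The hypothesis $\frac13\le a/b\le 3$ is needed only at the start, through $G(A)\le a/3\le b$ and $G(B)\le b/3\le a$.

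Finally, ``every subdivision removes a gap'' does not make lengths tend to $0$; the paper itself notes that $L(A)$ may have positive measure. What you need is that each side is split infinitely often unless it becomes gap-free. This holds because otherwise the other side would contain infinitely many disjoint gaps, each at least $G$ of the stalled piece, which exceeds its finite length. Compactness then gives $\gamma\in \bigcap_n C^{(n)}+\bigcap_n D^{(n)}\subseteq L(A)+L(B)$.
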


This theorem is proved in Hall \cite{h} by a sequence of intervals with monotonically decreasing lengths.

Denote by $B:=[\frac{1}{3},1]$. Now we construct the Cantor set $L(B)$ by subdividing $B$. The subintervals we shall use are of three types: 
\[
T :\quad (a_1,a_2,a_3,\ldots,a_r,a_{r+1},\ldots), a_1=k_1, \ldots, a_r=k_r,1 \leq a_j \leq 2, j \geq r + 1
\]
\[
T_1: \quad (a_1,a_2,a_3,\ldots,a_r,a_{r+1},\ldots),  a_1=k_1,\ldots,a_r=k_r, a_{r+1}=1, 1 \leq a_j \leq 2, j \geq r + 2
\]
\[
T_2 :\quad (a_1,a_2,a_3,\ldots,a_r,a_{r+1},\ldots),  a_1=k_1,\ldots,a_r=k_r, a_{r+1} =2,1 \leq a_j \leq 2, j \geq r + 2.
\]
Here $T$ represents the closed interval spanned by the maximum and minimum value of
$$\tau(a_1,a_2,a_3,\ldots,a_r,a_{r+1},\ldots).$$ The same applies to $T_1$ and $T_2$. Here  $k_1,k_2,\ldots,k_r$ are fixed and equal to 1 or 2.

Following the approach in the paper,we can divide $T$ to two closed subintervals separated $ T_1 $ and $ T_2 $ by a gap. Then by repeating the process, we get a Cantor set $L(B)$.

Given a number of $\tau(B_2)$, we may at any stage of subdivision find a subinterval that contains it.
There is a bijection map $\tau$. Thus, any number not in $\tau(B_2)$ will be represented by a from in $T$ with some $a_i$ exceeding 2. If this occurs as the $a_r$, it would not be included in $L(B)$. Hence, the Cantor point set $L(B)$ is the set of numbers $\tau(B_2)$.

Recall Theorem 2.2, if we prove that the subdivision of $B$ satisfies Condition \ref{con} at any stages, we can get the sum of Cantor set $L(B)+L(B)$ is the interval $ B+B=[\frac{2}{3},2] $. Let us divide $T$ to $T_1$ and $T_2$, and then calculate their length. Denote the maximal and minimum values of $T$ by M and N, respectively.
\[
T :N= \tau(k_1,k_2,\ldots,k_r,2,2,\ldots)=L+\frac{1}{2^{k_1+\ldots+k_r+2}}+\frac{1}{2^{k_1+\ldots+k_r+2+2}}+\ldots
\]
with $ L=\frac{1}{2^{k_1}} + \ldots + \frac{1}{2^{k_1+\ldots+k_r}} $.
\[
M= \tau(k_1,k_2,\ldots,k_r,1,1,\ldots)=L+\frac{1}{2^{k_1+\ldots+k_r+1}}+\frac{1}{2^{k_1+\ldots+k_r+1+1}}+\ldots.
\]
Then the length of $T$ at the stage $(k_1,\cdots, k_r)$ is 
\begin{align*}
M-N &=\frac{1}{2^{k_1+\ldots+k_r}} \left[ \left( \frac{1}{2}+\frac{1}{2^{2}}+\ldots \right)  -\left(\frac{1}{2^{2}}+\frac{1}{2^{4}}+\ldots \right) \right]\\
&=\frac{1}{2^{k_1+\ldots+k_r}}(1-\frac{1}{3})\\
&=\frac{2}{3}K ,
\end{align*}
where  $K =\frac{1}{2^{k_1+\ldots+k_r}}$.
\[
T_1 :N= \tau(k_1,k_2,\ldots,k_r,1,2,2,\ldots)=L+\frac{1}{2^{k_1+\ldots+k_r+1}}+\frac{1}{2^{k_1+\ldots+k_r+1+2}}+\ldots
\]
\[
M= \tau(k_1,k_2,\ldots,k_r,1,1,\ldots)=L+\frac{1}{2^{k_1+\ldots+k_r+1}}+\frac{1}{2^{k_1+\ldots+k_r+1+1}}+\ldots.
\]
Then the length of $T_1$ is 
\begin{align*}
M-N &=\frac{1}{2^{k_1+\ldots+k_r}} \left[ \left( \frac{1}{2}+\frac{1}{2^{2}}+\ldots \right) - \left(\frac{1}{2}+\frac{1}{2^3}+\ldots \right) \right]\\
&=\frac{1}{2^{k_1+\ldots+k_r}}(1-\frac{2}{3})\\
&=\frac{1}{3}K .
\end{align*}
\[
T_2 :N= \tau(k_1,k_2,\ldots,k_r,2,2,\ldots)=L+\frac{1}{2^{k_1+\ldots+k_r+2}}+\frac{1}{2^{k_1+\ldots+k_r+2+2}}+\ldots
\]
\[
M= \tau(k_1,k_2,\ldots,k_r,2,1,1,\ldots)=L+\frac{1}{2^{k_1+\ldots+k_r+2}}+\frac{1}{2^{k_1+\ldots+k_r+2+1}}+\ldots.
\]
Then the length of $T_2$ is 
\begin{align*}
M-N &=\frac{1}{2^{k_1+\ldots+k_r}} \left[ \left( \frac{1}{2^{2}}+\frac{1}{2^{3}}+\ldots \right) - \left(\frac{1}{2^{2}}+\frac{1}{2^{4}}+\ldots \right) \right]\\
&=\frac{1}{2^{k_1+\ldots+k_r}}(\frac{1}{2} - \frac{1}{3})\\
&=\frac{1}{6}K .
\end{align*}

Thus for the length of the gap $ T_{12}=(\frac{2}{3}- \frac{1}{3}- \frac{1}{6})K= \frac{1}{6}K $, we have $ T_{12} \leq T_1 $ and $ T_{12} \leq T_2 $ that satisfy Condition \ref{con} at any stages. So we have
\begin{prop}
$\tau(B_2)+\tau(B_2)=[\frac{2}{3}, 2]$.
\end{prop}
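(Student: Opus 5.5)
The plan is to apply Hall's Theorem \ref{asu} with $A=B=[\frac{1}{3},1]$, taking $L(B)=\tau(B_2)$ for the Cantor set obtained from the subdivision $T\to T_1,T_2$ described above. Three things must be checked: that $L(B)$ really equals $\tau(B_2)$, that every stage of the subdivision satisfies Condition \ref{con}, and that the length ratio condition $\frac13\le a/b\le 3$ holds. The last is trivial here, since $a=b=\frac23$ and the ratio is $1$.

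\textbf{Identification of the Cantor set.} The top interval is $[\tau(2,2,\ldots),\tau(1,1,\ldots)]=[\frac13,1]$. At a stage indexed by $(k_1,\ldots,k_r)$ with $k_i\in\{1,2\}$, the interval $T$ has endpoints $L+\frac{K}{3}$ and $L+K$, where $K=2^{-(k_1+\cdots+k_r)}$. By monotonicity of $\tau$ in the lexicographic sense (a smaller $k_{r+1}$ gives a larger value), $T$ contains the images of exactly those sequences with prefix $(k_1,\ldots,k_r)$ and all later entries in $\{1,2\}$.

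The computation above shows the following ordering:
\begin{itemize}
\item $T_2=[L+\frac{K}{3},L+\frac{K}{2}]$ corresponds to $a_{r+1}=2$;
\item $T_1=[L+\frac{2K}{3},L+K]$ corresponds to $a_{r+1}=1$;
\item these are separated by the open gap $(L+\frac K2,L+\frac{2K}{3})$.
\end{itemize}
The two pieces are disjoint, and their lengths shrink like $K\le 2^{-r}\to 0$. Hence each nested sequence of chosen intervals determines exactly one sequence in $B_2$ and collapses to its $\tau$-value. So $L(B)=\tau(B_2)$, using only that $\tau$ is continuous on sequences and bijective, as stated.

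\textbf{Condition \ref{con} at every stage.} The gap has length $\frac K6$, while $|T_1|=\frac K3$ and $|T_2|=\frac K6$. Thus $\frac K6\le\frac K3$ and $\frac K6\le \frac K6$, so the condition holds uniformly in $r$ and in $(k_1,\ldots,k_r)$ by self-similarity. Theorem \ref{asu} then gives $\tau(B_2)+\tau(B_2)=[\frac13,1]+[\frac13,1]=[\frac23,2]$.

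\textbf{Main obstacle.} The delicate point is that Condition \ref{con} holds with equality on the $T_2$ side ($c_{12}=c_2$). One should confirm that Hall's theorem permits the non-strict inequality, which it does as stated with ``$\le$''. With equality, the sums $C_1+C_2$ and $C_2+C_2$ abut at a single point rather than overlap, and that is still enough for covering.

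A secondary check is that Hall's theorem is stated with the left piece $C_1$ and the right piece $C_2$. Here the smaller piece $T_2$ lies on the left, but Condition \ref{con} is symmetric in $c_1,c_2$, so the labeling is irrelevant. If one prefers to avoid citing Hall, the inclusion $\subseteq$ is immediate. For $\supseteq$, run the standard nested-interval argument directly: given $\gamma\in[\frac23,2]$, at each stage choose a pair of subintervals $(T_i,T_j')$ whose sum contains $\gamma$. Such a pair always exists because the three sums $T_1+T_1'$, $T_1+T_2'$, $T_2+T_2'$ (together with the symmetric $T_2+T_1'$) cover $T+T'$ without gaps when the gap length is at most both piece lengths. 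Taking limits then produces the two summands in $\tau(B_2)$.
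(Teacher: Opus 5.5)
Your main argument is the paper's proof: the same self-similar subdivision of $[\frac13,1]$ into $T_2=[L+\frac K3,L+\frac K2]$ and $T_1=[L+\frac{2K}{3},L+K]$, the check that the gap $\frac K6$ is at most $|T_2|=\frac K6$ and $|T_1|=\frac K3$ (Condition~\ref{con} allows equality), and Hall's Theorem~\ref{asu} applied with $a/b=1$; your identification $L(B)=\tau(B_2)$ is spelled out more carefully than in the paper. The one thing to repair is the optional Hall-free sketch: after the first step the pair $(T,T')$ generally has different parameters $K\neq K'$ (choosing $(T_1,T_2')$ three times from $(K,K)$ already gives $K=8K'$), and then splitting both at once leaves an uncovered interval of length $K/12$ between the right end of $T_2+T_1'$ and the left end of $T_1+T_2'$, so you should split only the longer interval, which keeps $K/K'\in[\frac14,4]$ and hence the gap $\frac K6\le\frac{2K'}{3}=|T'|$, giving $T+T'=(T_1+T')\cup(T_2+T')$ at every step.
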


Moreover, by taking $m=k$ for an arbitrary $k \geq 2$ we can prove that 
\[
(c_1, 2) \subseteq \tau(B_m)+\tau(B_m) 
\]
for some $0<c_1<1$, and moreover $0<c_1 \leq\frac{2}{3}$ can be achieved. 

From $$ \tau (1,1,\ldots)=1,$$ $$ \tau(k,k,\ldots)=\frac{1}{2^k}+\frac{1}{2^{k+k} }+\ldots=\frac{1}{2^k-1} $$ and  the above analysis, we can obtain $ \tau(B_k) $ as a Cantor point set $ L(A) $ by subdividing $A:=  [ \frac{1}{2^k-1}, 1]$. The specific subdivision of subintervals is as follows types: 
 
\[
T_1:(a_1,a_2,a_3,\ldots,a_r,a_{r+1},\ldots),  a_1=k_1, \ldots, a_r=k_r, a_j \in [1,k] , j \geq r + 1
\]
\[
T_2: (a_1,a_2,a_3,\ldots,a_r,a_{r+1},\ldots), a_1=k_1,\ldots,a_r=k_r, a_{r+1} \in [2,k] , a_j \in [1,k], j \geq r + 2
\]
\[
T_3: (a_1,a_2,a_3,\ldots,a_r,a_{r+1},\ldots), a_1=k_1,\ldots,a_r=k_r, a_{r+1} \in [3,k], a_j \in [1,k], j \geq r + 2.
\]
\[
\ldots
\]
\[
T_{k-1}:  (a_1,a_2,a_3,\ldots,a_r,a_{r+1},\ldots), a_1=k_1,\ldots,a_r=k_r, a_{r+1} \in [k-1,k], a_j \in [1,k], j \geq r + 2.
\]
Here $A$ is of type with $r=0$ and  $T_i$ represents the interval spanned by the maximum and minimum value of
$$\tau(a_1,a_2,a_3,\ldots,a_r,a_{r+1},\ldots).$$ 
In the above definition, $k_j \in [1,k], 1 \leq j \leq r $. Now we can subdivide any $T_i$ with $1 \leq i \leq k-1$ into two types mentioned above of closed intervals and one gap. For example, an interval of type $T_1$ may be divided to an interval of type $T_1$ with $$a_{r+1}=1,1 \leq a_j \leq k, j \geq r+2$$ and an interval of type $T_2$ with $$2 \leq a_{r+1} \leq k,1 \leq a_j \leq k, j \geq r+2,$$ and the gap between these two intervals is removed. An interval of type $T_2$ may be divided to an interval of type $T_1$ with $$a_{r+1}=2,1 \leq a_j \leq k, j \geq r+2$$ and an interval of type $T_3$ with $$3 \leq a_{r+1} \leq k,1 \leq a_j \leq k, j \geq r+2,$$ and the gap between these two intervals iszz removed. So an interval of type $T_i$ with arbitrary $1\leq i \leq k-2$ may be divided to an interval of type $T_1$ with $$a_{r+1}=i,1 \leq a_j \leq k, j \geq r+2$$ and an interval of type $T_{i+1}$ with $$i+1 \leq a_{r+1} \leq k,1 \leq a_j \leq k, j \geq r+2,$$ and the gap between these two intervals is removed. Finally an interval of type $T_{k-1}$ may be divided to an interval of type $T_1$ with $$a_{r+1}=k-1,1 \leq a_j \leq k, j \geq r+2$$ and an interval of type $T_1$ with $$ a_{r+1}=k,1 \leq a_j \leq k, j \geq r+2,$$ and the gap between these two intervals is removed.  

Similarly, we arrive at a conclusion for $m=k$, which asserts that $L(A)$ constructed in this way coincides with the entire set $ \tau(B_k) $. To establish $ \tau(B_k)+\tau(B_k)=L(A)+L(A)=A+A $, it suffices to prove, through computation, that the length of the gap in subdivision of $T_k$ is no greater than the lengths of its two respective retained closed intervals. 

If $1 \leq i \leq k-1$, we have
\[
T_i: N= \tau(k_1,k_2, \ldots, k_r, k, k, \ldots),M=\tau(k_1,k_2, \ldots, k_r, i, 1, 1, \ldots). 
\]
Then the length of $T_i$ is
\[
M-N=K\left[ \left(\frac{1}{2^i}+ \frac{1}{2^{i+1}} + \frac{1}{2^{i+2}}+ \ldots)- (\frac{1}{2^k}+\frac{1}{2^{2k}}+ \ldots\right) \right]=K\left( \frac{1}{2^{i-1}} -\frac{1}{2^k-1} \right),
\]
where $K$ is $\frac{1}{2^{k_1+\ldots+k_r}}$.
\[
Subinterval \quad T_1:N= \tau(k_1,k_2, \ldots, k_r, i, k, k, \ldots),M=\tau(k_1,k_2, \ldots, k_r, i, 1, 1, \ldots).
\]
Then the length of subinterval $T_1$ is
\[
M-N=K\left[ \left(\frac{1}{2^i}+ \frac{1}{2^{i+1}} + \frac{1}{2^{i+2}}+ \ldots\right) -\left( \frac{1}{2^i}+ \frac{1}{2^{i+k}}+ \frac{1}{2^{i+2k}}+ \ldots\right)\right]=K\left(\frac{1}{2^{i-1}}-\frac{2^{k-i}}{2^k-1} \right).
\]
\[
Subinterval \quad T_{i+1}:N= \tau(k_1,k_2, \ldots, k_r, k, k, \ldots),M=\tau(k_1,k_2, \ldots, k_r, i+1, 1, 1, \ldots).
\]
Then the length of $T_i$ is
\[
M-N=K\left[\left( \frac{1}{2^{i+1}} + \frac{1}{2^{i+2}}+ \ldots\right)- \left(\frac{1}{2^k}+\frac{1}{2^{2k}}+ \ldots\right) \right]=K \left( \frac{1}{2^i}- \frac{1}{2^k-1} \right).
\]
Hence, the length of gap is 
\[
K\left[ \left( \frac{1}{2^{i-1}} -\frac{1}{2^k-1}\right) -\left(\frac{1}{2^{i-1}}-\frac{2^{k-i}}{2^k-1}\right)- \left( \frac{1}{2^i}- \frac{1}{2^k-1}\right)\right]= K\left( \frac{2^{k-i}}{2^k-1} -\frac{1}{2^i} \right)= \frac{K}{2^i(2^k-1)}.
\]
It is straightforward to verify that when $1 \leq i \leq k-1$, $$K \frac{1}{2^i(2^k-1)} \leq K\left(\frac{1}{2^{i-1}}-\frac{2^{k-i}}{2^k-1}\right), \; K \frac{1}{2^i(2^k-1)} \leq  K\left( \frac{1}{2^i}- \frac{1}{2^k-1}\right).$$

Through the above process, we have proven that the subdivision of A satisfies Condition \ref{con}. It then follows from Theorem \ref{asu} that we can obtain the sum of Cantor set $L(A)+L(A)$ is the interval $ A+A=[\frac{2}{2^k-1}, 2] $. 
\begin{prop}
For $k\geq 3$,
\[
\tau(B_k)+\tau(B_k) =A+A=\left[\frac{2}{2^k-1}, 2\right] .\]
\end{prop}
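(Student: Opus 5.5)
The plan is to realise $\tau(B_k)$ as a Hall Cantor set $L(A)$ with $A=[\frac{1}{2^k-1},1]$, check Condition \ref{con} at every stage, and then apply Theorem \ref{asu} with $A=B$; the ratio condition $\frac13\le a/b\le 3$ then holds trivially.

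\emph{Step 1: the Cantor set is exactly $\tau(B_k)$.} The construction uses the interval types $T_1,\dots,T_{k-1}$ described above. A type $T_i$ interval has prefix $(k_1,\dots,k_r)$ and $a_{r+1}\in[i,k]$. For $i\le k-2$ it splits into a $T_1$-type piece (fixing $a_{r+1}=i$) and a $T_{i+1}$-type piece ($a_{r+1}\in[i+1,k]$), with a gap between them. A $T_{k-1}$ interval splits into two $T_1$-type pieces, $a_{r+1}=k-1$ and $a_{r+1}=k$.

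By Proposition-type monotonicity of $\tau$ (the same alternating order structure as in Section 2, but here simply lexicographic: a smaller next exponent gives a larger value), the piece with $a_{r+1}=i$ lies entirely above the piece with $a_{r+1}\ge i+1$. Its minimum is $\tau(\dots,i,k,k,\dots)$ and the other piece's maximum is $\tau(\dots,i+1,1,1,\dots)$.

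Since $\tau$ is a bijection onto $(0,1]$, a point survives every stage if and only if all of its exponents lie in $[1,k]$. Also, the interval lengths tend to $0$, being at most $K=2^{-(k_1+\cdots+k_r)}\to0$. Together these give $L(A)=\tau(B_k)$, just as in the $m=2$ case.

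\emph{Step 2: Condition \ref{con} for splits with $1\le i\le k-2$.} Use the lengths already computed:
\[
|T_i|=K\Big(\tfrac{1}{2^{i-1}}-\tfrac{1}{2^k-1}\Big),\quad |\text{piece}_1|=K\Big(\tfrac{1}{2^{i-1}}-\tfrac{2^{k-i}}{2^k-1}\Big),\quad |\text{piece}_2|=K\Big(\tfrac{1}{2^{i}}-\tfrac{1}{2^k-1}\Big),
\]
so the gap has length $\frac{K}{2^i(2^k-1)}$. Multiply through by $2^i(2^k-1)/K$. The inequality gap $\le$ piece$_1$ becomes $1\le 2(2^k-1)-2^k=2^k-2$, which holds for $k\ge2$. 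The inequality gap $\le$ piece$_2$ becomes $1\le 2^k-1-2^i$, which holds for $i\le k-1$ and $k\ge 2$. (Equality, e.g. $i=k-1$, $k=2$, is allowed.)

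\emph{Step 3: the $T_{k-1}$ split.} This is the step I expect to be the main obstacle, since its second piece is a $T_1$-type interval with $a_{r+1}=k$, not a $T_k$ interval. Its length is $K2^{-k}\cdot\frac{2^k-2}{2^k-1}$, which is the piece-$1$ length at level $k$. Before checking, I would re-verify that the formulas from Step 2 still apply to the first piece ($a_{r+1}=k-1$) and to the gap.

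The gap runs from $\tau(\dots,k,1,1,\dots)$ to $\tau(\dots,k-1,k,k,\dots)$, so its length is $K\big(\frac{2^{k}}{2^{k-1}(2^k-1)}\cdot\frac12\cdot 2-\frac{1}{2^{k-1}}\big)$. Simplifying carefully, this equals $\frac{K}{2^{k-1}(2^k-1)}$, matching the general formula. Comparing with the second piece reduces to $2\le 2^k-2$, which is true for $k\ge2$. The first piece is handled by Step 2.

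With Condition \ref{con} verified at all stages, Theorem \ref{asu} gives $\tau(B_k)+\tau(B_k)=A+A=[\frac{2}{2^k-1},2]$.
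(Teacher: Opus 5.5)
Your proposal is correct and follows the paper's own argument: the same subdivision of $A=[\frac{1}{2^k-1},1]$ into intervals of types $T_1,\dots,T_{k-1}$, the same gap length $\frac{K}{2^i(2^k-1)}$ checked against both retained pieces (your reductions to $1\le 2^k-2$ and $2^i\le 2^k-2$ are right), and then Hall's Theorem~\ref{asu} applied with $A=B$. Your separate Step 3 is harmless but unnecessary: the general formulas already hold verbatim for $i=k-1$, because the piece with $a_{r+1}\in[k,k]$ is exactly the $T_1$-type interval with $a_{r+1}=k$, and this is how the paper treats all $1\le i\le k-1$ uniformly.
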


\section{Sums and products of infinite depth multiple zeta-star values}

Theorem 1.1 can now be proved using an argument similar to that presented in Section 3. For $ \mathcal{D}_q=\big{\{} (k_1,\cdots, k_r,\cdots)\in \mathcal{T}\;\big{|}\;  k_i\leq q, i\geq 1 \big{\}} $ with $ q \geq 2 $ and $k_1\geq 2$, we have $$\eta(\mathcal{D}_q)= [ \zeta^\star(\{q\}^{\infty}), +\infty) :=D . $$ Since $\zeta^\star(2,\{1\}^{\infty})$ is divergent here, we regard it as positive infinity, and treat the length of $D$ as positive infinity. It is convenient to set $$ D= [ \zeta^\star(\{q\}^{\infty}), \zeta^\star(2,\{1\}^{\infty}) ) .$$ 

By subdividing $ \eta(\mathcal{D}_q) $ in the same way as $ \tau(B_k) $, we obtain the Cantor point set $L(D)$ and then apply Theorem 2.2 to yield the result $$ D+D=[c_q,+\infty) $$ with $ c_q=2\zeta^\star(\{q\}^{\infty}) $. This allows us to view the rightmost endpoint of the Cantor set $L(D)$ as positive infinity, since $D$ is a half-line on the real axis. Consequently, the rightmost subinterval in the subdivision is an infinite closed interval with infinite length. Then the similar subdivision of subintervals is as follows types ($k_1\geq 2$):
\[
T_1:(a_1,a_2,a_3,\cdots,a_r,a_{r+1},\cdots),  a_1=k_1, \ldots, a_r=k_r, a_j \in [1,q] , j \geq r + 1
\]
\[
T_2: (a_1,a_2,a_3,\cdots,a_r,a_{r+1},\cdots), a_1=k_1,\ldots,a_r=k_r, a_{r+1} \in [2,q] , a_j \in [1,q], j \geq r + 2
\]
\[
T_3: (a_1,a_2,a_3,\cdots,a_r,a_{r+1},\cdots), a_1=k_1,\ldots,a_r=k_r, a_{r+1} \in [3,q], a_j \in [1,q], j \geq r + 2.
\]
\[
\ldots
\]
\[
T_{q-1}:  (a_1,a_2,a_3,\cdots,a_r,a_{r+1},\cdots), a_1=k_1,\ldots,a_r=k_r, a_{r+1} \in [q-1,q], a_j \in [1,q], j \geq r + 2.
\]
Here, $T_i$ represents the interval spanned by the maximum and minimum value of
$$\zeta^\star(a_1,a_2,a_3,\cdots,a_r,a_{r+1},\cdots).$$ Namely that $T_i$ is $$[\zeta^\star(k_1,k_2, \cdots, k_r, \{q\}^{\infty} ), \zeta^\star(k_1,k_2, \cdots, k_r, i, \{1\}^{\infty} ) ], $$ and the initial interval $D$ is of type $T_2$ with $r=0$. In this definition, we have $k_j\in [1,q], 1\leq j\leq r $, with the additional condition that $k_1 \geq 2$. Next, dividing an interval of type $T_i$ with arbitrary $1\leq i\leq q-1$ to an interval of type $T_1$ with $$ a_{r+1}=i,1\leq a_j\leq q,j\geq r+2 $$ and an interval of type $T_{ i+1}$ with  $$ i+1\leq a_{r+1}\leq q,1\leq a_j\leq q,j\geq r+2 $$ and the gap between these two intervals removed.

Hence, it suffices to verify that this subdivision of the interval $D$ satisfies Condition \ref{con} to obtain the desired conclusion $ D+D=[c_q,+\infty) $. In the following discussion, we view $\zeta^\star(2, \{1\}^{\infty})$ as $+\infty$.

If $1 \leq i \leq q-1$, we have
\[
T_i: N= \zeta^\star(k_1,k_2, \ldots, k_r, q, q, \ldots),M=\zeta^\star(k_1,k_2, \ldots, k_r, i, 1, 1, \ldots). 
\]
Then denote $L$ for the length of $T_i$.
\[
Subinterval \quad T_1:N= \zeta^\star(k_1,k_2, \ldots, k_r, i, q, q, \ldots),M=\zeta^\star(k_1,k_2, \ldots, k_r, i, 1, 1, \ldots).
\]
Then denote $L_1$ for the length of $T_1$.
\[
Subinterval \quad T_{i+1}:N= \zeta^\star(k_1,k_2, \ldots, k_r, q, q, \ldots),M=\zeta^\star(k_1,k_2, \ldots, k_r, i+1, 1, 1, \ldots).
\]
Then denote $L_{2} $ for the length of $T_{i+1}$.

Hence, the length of gap is $ L-L_1-L_2 $. We only need to prove $$ L-L_1-L_2 \leq L_1,\;\;\; L-L_1-L_2 \leq L_2 ,$$ then $ \eta(\mathcal{D}_q)+\eta(\mathcal{D}_q)=[c_q, +\infty) $ follows.

To prove $ L-L_1-L_2 \leq L_1 $, it  is equivalent to prove
\[
\begin{aligned}
\zeta^\star(k_1,k_2, \ldots, k_r,i, q, q, \ldots)&- \zeta^\star(k_1,k_2, \ldots, k_r, i+1, 1, 1, \ldots)
\\\leq \zeta^\star(k_1,k_2, \ldots, k_r, i, 1, 1, \ldots) &-\zeta^\star(k_1,k_2, \ldots, k_r, i, q, q, \ldots) 
\end{aligned}
\]
i.e.
\[
\begin{aligned}
&2\sum_{n_1 \geq \cdots \geq n_r \geq 1} \frac{1}{n_1^{k_1} \cdots n_r^{k_r}} \sum_{n_r \geq n_{r+1} \geq 1}\frac{1}{n^i_{r+1}} \prod_{n=2}^{n_{r+1}} \left(1 - \frac{1}{n^q}\right)^{-1}  
 \\ \leq &\sum_{n_1 \geq \cdots \geq n_r \geq 1} \frac{1}{n_1^{k_1} \cdots n_r^{k_r}} \sum_{n_r \geq n_{r+1} \geq 1} {n^ {1-i}_{r+1}} 
 +\sum_{n_1 \geq \cdots \geq n_r \geq 1} \frac{1}{n_1^{k_1} \cdots n_r^{k_r}} \sum_{n_r \geq n_{r+1} \geq 1} {n^ {-i}_{r+1}}
 \\ = & \sum_{n_1 \geq \cdots \geq n_r \geq 1} \frac{1}{n_1^{k_1} \cdots n_r^{k_r}} \sum_{n_r \geq n_{r+1} \geq 1} ( {n^ {1-i}_{r+1}} +{n^ {-i}_{r+1})}.
\end{aligned}
\]
Because of \[ 
\sum_{m\geq n_{r+1} \geq n_{r+2} \geq \cdots \geq 1}\frac{1}{n_{r+1} ^{q} n_{r+2}^{q} \cdots} =\prod_{n=2}^{m} \left(1-\frac{1}{n^q} \right)^{-1} 
\] 
for all $q,m\geq 1$. Here  we interpret that $\prod_{n=2}^{m} \left(1-\frac{1}{n^q} \right)^{-1}=1 $ for $m=1$.
Then it suffices to prove 
\[
2\cdot \prod_{n=2}^{m} \left(1-\frac{1}{n^q} \right)^{-1} \leq m+1 \tag{A}
\]
for all $q\geq 2, m\geq 1$.
Denote by  $F_m(q)=\prod_{n=2}^{m} \left(1-\frac{1}{n^q} \right)^{-1} $, it is easy to see that $ F_m(q) \leq F_m(2) $. 
For $m=1$, the statement $(A)$ is obvious. 
For $m\geq 2$, we have 
\[
2\cdot F_m(2)=2\cdot \prod_{n=2}^{m} \left(1-\frac{1}{n^2} \right)^{-1} = 2\cdot \prod_{n=2}^{m} \left(\frac{(n-1)(n+1)}{n^2} \right)^{-1}= \frac{4m}{m+1} \leq m+1.
\]
So we have proved $ L-L_1-L_2 \leq L_1 $.

Similarly, to prove $ L-L_1-L_2 \leq L_2 $, it is equivalent  to prove
\[
\begin{aligned}
&\zeta^\star(k_1,k_2, \ldots, k_r,i, q, q, \ldots) - \zeta^\star(k_1,k_2, \ldots, k_r, i+1, 1, 1, \ldots)
\\\leq \; &\zeta^\star(k_1,k_2, \ldots, k_r, i+1, 1, 1, \ldots) -\zeta^\star(k_1,k_2, \ldots, k_r, q, q, \ldots) 
\end{aligned}
\]
i.e.
\[
\begin{aligned}
&\sum_{n_1 \geq \cdots \geq n_r \geq 1} \frac{1}{n_1^{k_1} \cdots n_r^{k_r}} \sum_{n_r \geq n_{r+1} \geq 1}\frac{1}{n^i_{r+1}} \prod_{n=2}^{n_{r+1}} \left(1 - \frac{1}{n^q}\right)^{-1} 
\\+&\sum_{n_1 \geq \cdots \geq n_r \geq 1} \frac{1}{n_1^{k_1} \cdots n_r^{k_r}} \sum_{n_r \geq n_{r+1} \geq 1}\frac{1}{n^q_{r+1}} \prod_{n=2}^{n_{r+1}} \left(1 - \frac{1}{n^q}\right)^{-1}  
\\ \leq& 2\sum_{n_1 \geq \cdots \geq n_r \geq 1} \frac{1}{n_1^{k_1} \cdots n_r^{k_r}} \sum_{n_r \geq n_{r+1} \geq 1} {n^ {-i}_{r+1}}
\end{aligned}
\]
Then it suffices to prove 
\[
\sum_{m=1}^{n_r}(m^{-i}+m^{-q})\prod_{n=2}^m\left( 1-\frac{1}{n^q}\right) ^{-1} \leq 2\sum_{m=1}^{n_r}m^{-i}
\]
for all $n_r\geq 1$. 
It suffices to show that 
\[
F_m(q)=\prod_{n=2}^{m} \left(1-\frac{1}{n^q} \right)^{-1} \leq \frac{2m^{-i}}{m^{-i}+m^{-q}}
\]
for $m\geq 1$.
As we have
\[
F_m(2)=\prod_{n=2}^{m} \left(1-\frac{1}{n^2} \right)^{-1} = \frac{2m}{m+1} \leq \frac{2m}{m+m^{i+1-q}}=\frac{2m^{-i}}{m^{-i}+m^{-q}} 
\]
with $m\geq1,1\leq i\leq q-1$. Since  $ F_m(q) \leq F_m(2) $,  we have proved $$ L-L_1-L_2 \leq L_2 .$$

Finally, it suffices to prove that $$\eta(\mathcal{D}_q)\cdot \eta(\mathcal{D}_q)=[d_q, +\infty)$$ with 
$$d_q=\left(\zeta^\star(\{q\}^{\infty})\right)^2=   \prod_{n\geq 2}\frac{1}{\left(1-\frac{1}{n^q}\right)^2}.$$ Let $ D^*=[\frac{\mathrm{log}\; d_q}{2},+\infty) $. Define the set $D^*$ as the set formed by taking the logarithm of each element in $D$. 

As shown in Figure:

\begin{figure}[htbp]
	\centering
	
	\begin{tikzpicture}
		
		\def\start{0}          
		\def\r{3.5}            
		\def\s{2.6}          
		\def\t{3.8}            
		\def\b{\start+\r}      
		\def\c{\b+\s}          
		\def\d{\c+\t}          		
		
		\draw[thick] (\start,0) -- (\d,0);

		\foreach \x/\name in {\start/a, \b/b, \c/c, \d/d}
		{
			\draw (\x,0.1) -- (\x,-0.1);
		}
		{
		\node[below] at (\start,-0.15) {$\zeta^\star(k_1, \ldots, k_r, \{q\}^{\infty})$};  
		\node[below=0.6cm] at (\b,-0.15) {$ \zeta^\star(k_1, \ldots, k_r,i+1,  \{1\}^{\infty})$}; 
		\node[below] at (\c,-0.15) {$\zeta^\star(k_1, \ldots, k_r,i,  \{q\}^{\infty})$};      
		\node[below=0.6cm] at (\d,-0.15) {$\zeta^\star(k_1, \ldots, k_r,i, \{1\}^{\infty})$};
		}

		\draw[decorate,decoration={brace,amplitude=5pt,raise=4pt}]
		(\start,0) -- (\b,0) node[midway,above=8pt] {$r$};

		\draw[decorate,decoration={brace,amplitude=5pt,raise=4pt}]
		(\b,0) -- (\c,0) node[midway,above=8pt] {$s$};
	
		\draw[decorate,decoration={brace,amplitude=5pt,raise=4pt}]
		(\c,0) -- (\d,0) node[midway,above=8pt] {$t$};
		
	\end{tikzpicture}
	
	\caption{interval of $T_i$}
	\label{fig:interval}
\end{figure}

Instead of dealing with infinities, we first consider the product for $ \eta(\mathcal{D}_q) \cap  [1, \zeta^\star(2, \{1\}^{r-1} ) ] $ and then let $r \to +\infty$. As shown by the above figure, to show that 
\[
\mathrm{log}\,\left(    \eta(\mathcal{D}_q) \cap  [1, \zeta^\star(2, \{1\}^{r-1} ) ]     \right)+\mathrm{log}\, \left(    \eta(\mathcal{D}_q) \cap  [1, \zeta^\star(2, \{1\}^{r-1} ) ]     \right)\]
is a closed interval, by Hall's Condition \ref{con}, it suffices to show that 
\[
\begin{split}
&\;\;\;\;\,\mathrm{log}\, \left(  \zeta^\star(k_1, \ldots, k_r,i,  \{q\}^{\infty})  \right) - \mathrm{log}\, \left(   \zeta^\star(k_1, \ldots, k_r,i+1,  \{1\}^{\infty})  \right)\\
&\leq \mathrm{log}\, \left(    \zeta^\star(k_1, \ldots, k_r,i,  \{1\}^{\infty})   \right)-\mathrm{log}\, \left(  \zeta^\star(k_1, \ldots, k_r,i,  \{q\}^{\infty})  \right), 
\end{split}    \tag{A}
\]
\[
\begin{split}
&\;\;\;\;\,\mathrm{log}\, \left(   \zeta^\star(k_1, \ldots, k_r,i,  \{q\}^{\infty}) \right) - \mathrm{log}\, \left(    \zeta^\star(k_1, \ldots, k_r,i+1,  \{1\}^{\infty})   \right)\\
&\leq \mathrm{log}\, \left(  \zeta^\star(k_1, \ldots, k_r,i+1,  \{1\}^{\infty})    \right)-\mathrm{log}\, \left(   \zeta^\star(k_1, \ldots, k_r,  \{q\}^{\infty})  \right)  
\end{split}     \tag{B}
\]
for all $2\leq k_1\leq q, 1\leq k_2,\cdots, k_r\leq q, 1\leq i\leq q-1$ and 
\[
(k_1,\cdots, k_r)\neq (2,\{1\}^{r-1}).
\]
As 
\[
\zeta^\star(k_1,k_2, \ldots, k_r,i, \{1\}^{\infty})=
\sum_{n_1\geq \cdots \geq n_r \geq n_{r+1}\geq 1} \frac{1}{n_1^{k_1}\cdots n_r^{k_r} n_{r+1}^{i-1}}, \]
\[
\zeta^\star(k_1,k_2, \ldots, k_r,i+1, \{1\}^{\infty})=
\sum_{n_1\geq \cdots \geq n_r \geq n_{r+1}\geq 1} \frac{1}{n_1^{k_1}\cdots n_r^{k_r} n_{r+1}^{i}}, \]
\[
\zeta^\star(k_1,k_2, \ldots, k_r,i, \{q\}^{\infty})=
\sum_{n_1\geq \cdots \geq n_r \geq n_{r+1}\geq 1} \frac{1}{n_1^{k_1}\cdots n_r^{k_r} n_{r+1}^{i}}\cdot \prod_{m=2}^{n_{r+1}} \frac{1}{1-\frac{1}{m^q}   }, \]
\[
\zeta^\star(k_1,k_2, \ldots, k_r, \{q\}^{\infty})=
\sum_{n_1\geq \cdots \geq n_r \geq n_{r+1}\geq 1} \frac{1}{n_1^{k_1}\cdots n_r^{k_r} n_{r+1}^{q}}\cdot \prod_{m=2}^{n_{r+1}} \frac{1}{1-\frac{1}{m^q}   }, \]
 the inequalities $(A)$ and $(B)$ are  equivalent to
 \[
 \begin{split}
 &\;\;\;\;   \left(   \sum_{n_1\geq \cdots \geq n_r \geq n_{r+1}\geq 1} \frac{1}{n_1^{k_1}\cdots n_r^{k_r} n_{r+1}^{i}}\cdot \prod_{m=2}^{n_{r+1}} \frac{1}{1-\frac{1}{m^q}   }       \right)^2                   \\
 &\leq        \left(  \sum_{n_1\geq \cdots \geq n_r \geq n_{r+1}\geq 1} \frac{1}{n_1^{k_1}\cdots n_r^{k_r} n_{r+1}^{i-1}}     \right)\left(   \sum_{n_1\geq \cdots \geq n_r \geq n_{r+1}\geq 1} \frac{1}{n_1^{k_1}\cdots n_r^{k_r} n_{r+1}^{i}}   \right) ,       \\
 \end{split} \tag{${\mathrm{A}}^\prime$}
 \]
  \[
 \begin{split}
 &\;\;\left(   \sum_{n_1\geq \cdots \geq n_r \geq n_{r+1}\geq 1} \frac{1}{n_1^{k_1}\cdots n_r^{k_r} n_{r+1}^{i}}\cdot \prod_{m=2}^{n_{r+1}} \frac{1}{1-\frac{1}{m^q}   }        \right)\\
 &\cdot \left(  \sum_{n_1\geq \cdots \geq n_r \geq n_{r+1}\geq 1} \frac{1}{n_1^{k_1}\cdots n_r^{k_r} n_{r+1}^{q}}\cdot \prod_{m=2}^{n_{r+1}} \frac{1}{1-\frac{1}{m^q}   }   \right)\\
 &\leq   \left(     \sum_{n_1\geq \cdots \geq n_r \geq n_{r+1}\geq 1} \frac{1}{n_1^{k_1}\cdots n_r^{k_r} n_{r+1}^{i}}   \right)^2                  \\
  \end{split} \tag{${\mathrm{B}}^\prime$}
 \]
 for all $2\leq k_1\leq q, 1\leq k_2,\cdots, k_r\leq q, 1\leq i\leq q-1$ and 
\[
(k_1,\cdots, k_r)\neq (2,\{1\}^{r-1}).
\]
For $q\geq 2,n\geq 1$, one has
\[
\begin{split}
&\prod_{m=2}^n\frac{1}{1-\frac{1}{m^q}}\leq \prod_{m=2}^n\frac{1}{1-\frac{1}{m^2}}=\frac{2n}{n+1}.
\end{split}
\]
Thus it suffices to show that
\[
 \begin{split}
 &\;\;\;\;   \left(   \sum_{n_1\geq \cdots \geq n_r \geq n_{r+1}\geq 1} \frac{1}{n_1^{k_1}\cdots n_r^{k_r} n_{r+1}^{i}}\cdot     \frac{2n_{r+1}}{n_{r+1}+1}    \right)^2                   \\
 &\leq        \left(  \sum_{n_1\geq \cdots \geq n_r \geq n_{r+1}\geq 1} \frac{1}{n_1^{k_1}\cdots n_r^{k_r} n_{r+1}^{i-1}}     \right)\left(   \sum_{n_1\geq \cdots \geq n_r \geq n_{r+1}\geq 1} \frac{1}{n_1^{k_1}\cdots n_r^{k_r} n_{r+1}^{i}}   \right) ,       \\
 \end{split} \tag{C}
 \]
  \[
 \begin{split}
 &\;\;\left(   \sum_{n_1\geq \cdots \geq n_r \geq n_{r+1}\geq 1} \frac{1}{n_1^{k_1}\cdots n_r^{k_r} n_{r+1}^{i}}\cdot   \frac{2n_{r+1}}{n_{r+1}+1}    \right)\\
 &\cdot \left(  \sum_{n_1\geq \cdots \geq n_r \geq n_{r+1}\geq 1} \frac{1}{n_1^{k_1}\cdots n_r^{k_r} n_{r+1}^{q}}\cdot   \frac{2n_{r+1}}{n_{r+1}+1}    \right)\\
 &\leq   \left(     \sum_{n_1\geq \cdots \geq n_r \geq n_{r+1}\geq 1} \frac{1}{n_1^{k_1}\cdots n_r^{k_r} n_{r+1}^{i}}   \right)^2                  \\
  \end{split} \tag{D}
 \]
 for all $2\leq k_1\leq q, 1\leq k_2,\cdots, k_r\leq q, 1\leq i\leq q-1$ and 
\[
(k_1,\cdots, k_r)\neq (2,\{1\}^{r-1}).
\]

By the well-known Cauchy-Schwarz formula
\[
\big{(}\sum_i a_i b_i\big{)}^2\leq \big{(}\sum_i a_i^2\big{)} \big{(}\sum_i  b_i^2\big{)},
\]
we have
\[
\begin{split}
&\;\;\;\; \left(   \sum_{n_1\geq \cdots \geq n_r \geq n_{r+1}\geq 1} \frac{1}{n_1^{k_1}\cdots n_r^{k_r} n_{r+1}^{i}}\cdot     \frac{2n_{r+1}}{n_{r+1}+1}    \right)^2       \\
&=   \left(   \sum_{n_1\geq \cdots \geq n_r \geq n_{r+1}\geq 1} \left(\frac{n_{r+1}}{n_1^{k_1}\cdots n_r^{k_r} n_{r+1}^{i}} \right)^{\frac{1}{2}} \cdot  \left(\frac{n_{r+1}}{n_1^{k_1}\cdots n_r^{k_r} n_{r+1}^{i}} \right)^{\frac{1}{2}}    \frac{2}{n_{r+1}+1}    \right)^2   \\                                                                                        
&\leq    \left(  \sum_{n_1\geq \cdots \geq n_r \geq n_{r+1}\geq 1}      \frac{n_{r+1}}{n_1^{k_1}\cdots n_r^{k_r} n_{r+1}^{i}}                     \right)       \left(   \sum_{n_1\geq \cdots \geq n_r \geq n_{r+1}\geq 1}                             \frac{1}{n_1^{k_1}\cdots n_r^{k_r} n_{r+1}^{i}}\cdot \frac{4n_{r+1}}{(n_{r+1}+1)^2}  \right)                       \\
&\leq        \left(  \sum_{n_1\geq \cdots \geq n_r \geq n_{r+1}\geq 1} \frac{1}{n_1^{k_1}\cdots n_r^{k_r} n_{r+1}^{i-1}}     \right)\left(   \sum_{n_1\geq \cdots \geq n_r \geq n_{r+1}\geq 1} \frac{1}{n_1^{k_1}\cdots n_r^{k_r} n_{r+1}^{i}}   \right) .\\
\end{split}
\]
As a result, the inequality (C) is proved.

 Since $1\leq i\leq q-1$, one has
 \[
 \begin{split}
 &\;\;\left(   \sum_{n_1\geq \cdots \geq n_r \geq n_{r+1}\geq 1} \frac{1}{n_1^{k_1}\cdots n_r^{k_r} n_{r+1}^{i}}\cdot   \frac{2n_{r+1}}{n_{r+1}+1}    \right)\\
& \cdot \left(  \sum_{n_1\geq \cdots \geq n_r \geq n_{r+1}\geq 1} \frac{1}{n_1^{k_1}\cdots n_r^{k_r} n_{r+1}^{q}}\cdot   \frac{2n_{r+1}}{n_{r+1}+1}    \right)\\ 
&\leq  \left(   \sum_{n_1\geq \cdots \geq n_r \geq n_{r+1}\geq 1} \frac{1}{n_1^{k_1}\cdots n_r^{k_r} n_{r+1}^{i}}\cdot   \frac{2n_{r+1}}{n_{r+1}+1}    \right)\\
& \cdot \left(  \sum_{n_1\geq \cdots \geq n_r \geq n_{r+1}\geq 1} \frac{1}{n_1^{k_1}\cdots n_r^{k_r} n_{r+1}^{i+1}}\cdot   \frac{2n_{r+1}}{n_{r+1}+1}    \right)\\ 
&\leq \left[\frac{1}{2}\left(    \sum_{n_1\geq \cdots \geq n_r \geq n_{r+1}\geq 1} \frac{1}{n_1^{k_1}\cdots n_r^{k_r} n_{r+1}^{i}}\cdot   \frac{2n_{r+1}}{n_{r+1}+1} +    \sum_{n_1\geq \cdots \geq n_r \geq n_{r+1}\geq 1} \frac{1}{n_1^{k_1}\cdots n_r^{k_r} n_{r+1}^{i+1}}\cdot   \frac{2n_{r+1}}{n_{r+1}+1}     \right)         \right]^2\\
&\leq \left(    \sum_{n_1\geq \cdots \geq n_r \geq n_{r+1}\geq 1} \frac{1}{n_1^{k_1}\cdots n_r^{k_r} n_{r+1}^{i}}\right)^2.
\end{split}
 \]
 Here the second inequality follows from the following inequality:
 \[
 xy\leq \left(  \frac{x+y}{2}   \right)^2,\; \; x,y\in\mathbb{R}.
 \]
 As a result, the inequality (D) is proved.
Thus we have 
\[
\begin{split}
&\;\;\;\;\bigg{(} \eta(\mathcal{D}_q) \cap  [1, \zeta^\star(2, \{1\}^{r-1} ) ] \bigg{)}\cdot \bigg{(} \eta(\mathcal{D}_q) \cap  [1, \zeta^\star(2, \{1\}^{r-1} ) ] \bigg{)}\\
&=\big{[}\left(\zeta^\star(\{q\}^\infty)\right)^2, \left(\zeta^\star(2, \{1\}^{r-1})\right)^2\big{]}.  
\end{split}
\]
By letting $r\rightarrow +\infty$, one has 
\[\eta(\mathcal{D}_q)\cdot \eta(\mathcal{D}_q)=[\left(\zeta^\star(\{q\}^\infty)\right)^2 , +\infty).\]

For the proof of Theorem \ref{low}, $(iii),(iv)$, it suffices to note that if $A=B=\eta(\mathcal{D}_q)$ satisfies Hall's criterion (Theorem \ref{asu}), then the pair 
\[A= \eta(\mathcal{D}_q),B=-\eta(\mathcal{D}_q)\]
also satisfies Hall's criterion. Thus $\eta(\mathcal{D}_q)-\eta(\mathcal{D}_q)$ is a closed interval of $\mathbb{R}$.
Since $$\zeta^\star(2,\{1\}^{r})=\zeta^\star(2,\{1\}^{r-1},2,\{1\}^{\infty})=(r+1)\zeta(r+2)\in \eta(\mathcal{D}_q),\; r\geq 1,$$
one has $$\eta(\mathcal{D}_q)-\eta(\mathcal{D}_q)=\mathbb{R}.$$
By the same analysis, from the proof of Theorem \ref{low}, $(ii)$, we have 
$$\eta(\mathcal{D}_q)/\eta(\mathcal{D}_q)=(0,+\infty).$$

We have established that the infinite multiple zeta-star values, when all indices  do not exceed $q$, form a sumset $ \eta(\mathcal{D}_q)+\eta(\mathcal{D}_q) $ that is an interval. A natural question arises as to whether a similar conclusion holds when all indices are not less than $p$. That is, for $p\geq 2$, define $$ \mathcal{T}_p=\big{\{} (k_1,\cdots, k_r,\cdots)\in \mathcal{T}\;\big{|}\; k_i\geq p, i\geq 1 \big{\}} .$$ Since $\eta(\mathcal{T}_p)$ is not a closed set, we consider its closure $\overline{\eta(\mathcal{T}_p)}$. Unfortunately, some simple computations show that  $\overline{\eta(\mathcal{T}_p)} + \overline{\eta(\mathcal{T}_p)} $ is not a closed interval. 

When $p=2$, we have $\overline{\eta(\mathcal{T}_2)} \subset [1,2]$. In the first stage of the Cantor set subdivision, the intervals $$U_1 = [1 , \zeta^\star(3,\{2\}^{\infty})]=[1, 2\zeta(2)-2] $$ and $$U_2 = [\zeta^\star( 3, \{1\}^{\infty}), \zeta^\star(\{2\}^{\infty})] = [\zeta(2), 2] $$ are retained, while the interval $$U_{12} = ( \zeta^\star( 3, \{2\}^{\infty}),\zeta^\star( 3, \{1\}^{\infty})) = (2\zeta(2)-2, \zeta(2))$$ is deleted. Here 
\[
\zeta^\star (3, 2, 2, \dots) = \sum_{n_1 \geq n_2 \geq  \dots, \geq 1} \frac{1}{n_1^3 n_2^2 n_3^2 \cdots} = \sum_{n=1}^\infty \frac{1}{n^3} \cdot \frac{2n}{n+1} = 2 \zeta(2)-2
\]

See Figure:

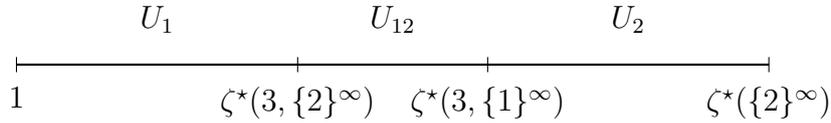
\begin{figure}[htbp]
	\centering
	\begin{tikzpicture}
	
		\def\start{0}         
		\def\r{3.7}           
		\def\s{2.5}            
		\def\t{3.7}          
		\def\b{\start+\r}    
		\def\c{\b+\s}         
		\def\d{\c+\t}

		\draw[thick] (\start,0) -- (\d,0);

		\foreach \x/\name in {\start/1, \b/ {\zeta^\star(3,\{2\}^{\infty})}, \c/{\zeta^\star( 3, \{1\}^{\infty})}, \d/{\zeta^\star(\{2\}^{\infty})} }
		{
			\draw (\x,0.1) -- (\x,-0.1);         
			\node[below] at (\x,-0.15) {$\name$}; 
		}

	\node at (\start+\r/2, 0) [above=8pt] {$U_1$};

	\node at (\b+\s/2, 0) [above=8pt] {$U_{12}$};

	\node at (\c+\t/2, 0) [above=8pt] {$U_2$};
		
	\end{tikzpicture}
	
	\caption{ first stage of $\overline{\eta(\mathcal{T}_2)}$ subdivision }
	\label{fig:interval2}
\end{figure}

So
\[
U_1 + U_1 =[ 2, 2 (2\zeta(2)-2) ]
\]
\[
U_1 + U_2 =[ 1+\zeta(2), 2\zeta(2)-2 +2 ]
\]
\[
U_2 + U_2 =[ 2\zeta(2), 4 ].
\]

Clearly, $ \overline{\eta(\mathcal{T}_2)} \subset U_1 \cup U_2 $, then $$\overline{\eta(\mathcal{T}_2)} + \overline{\eta(\mathcal{T}_2)} \subset (U_1 + U_1) \cup (U_1 + U_2) \cup (U_2 + U_2).$$ But $(U_1 + U_1) \cup (U_1 + U_2) \cup (U_2 + U_2)$ is not a closed interval because $ 2 (2\zeta(2)-2) <1+\zeta(2)$ and $ 2\zeta(2)-2 +2 \leq 2\zeta(2) $. As $2,4\in \overline{\eta(\mathcal{T}_2)}+\overline{\eta(\mathcal{T}_2)}$ and $$(U_1 + U_1) \cup (U_1 + U_2) \cup (U_2 + U_2)$$ is not a closed interval,  it follows that $\overline{\eta(\mathcal{T}_2)}+\overline{\eta(\mathcal{T}_2)}$ is not a closed interval. Furthermore, the sumset of $ \overline{\eta(\mathcal{T}_p)} \subset \overline{\eta(\mathcal{T}_2)} $ with itself cannot be a closed interval.

The same method applies to multiplication. Firstly, take the logarithm of each element in $U_1$, $U_2$ and $U_{12}$, then denote them respectively by $U_1^\star$, $U_2^\star$ and $U_{12}^\star$. We can get
\[
U_1^\star + U_1^\star =[ log(1), 2log(2\zeta(2)-2) ]
\]
\[
U_1^\star + U_2^\star =[ log( 1 \cdot \zeta(2) ),  log( ( 2\zeta(2)-2 ) \cdot 2) ]
\]
\[
U_2^\star + U_2^\star =[ 2log( \zeta(2) ), 2log2 ].
\]
Because of $ \frac{2}{3}\pi^2 - 4 < \frac{\pi^4}{36} $, we have $ log( 4( \zeta(2) - 1 ) ) <  2log( \zeta(2) ) $. So $(U_1^\star + U_1^\star) \cup (U_1^\star + U_2^\star) \cup (U_2^\star + U_2^\star)$ is not a closed interval. Thus 
\[
\overline{\eta(\mathcal{T}_2)} \cdot \overline{\eta(\mathcal{T}_2)}\]
is not a closed interval.
As 
\[
\overline{\eta(\mathcal{T}_p)} \subseteq \overline{\eta(\mathcal{T}_2)} \]
for any $p\geq 2$, 
$\overline{\eta(\mathcal{T}_p)} \cdot \overline{\eta(\mathcal{T}_p)}$ cannot be a closed interval for any $ p \geq 2 $.

\begin{rem}
	When thinking this problem, we may first consider the binary expansion $\tau$ of the function value. Define $L_m=\bigg{\{} (a_n)_{n\geq1}\;\bigg{|}\; a_n\in\mathbb{N}, a_n\geq m\bigg{\}}$. It is surprising to find that, by the Newhouse's Thickness Theorem \cite{new}, $ \overline{\tau(L_2)} + \overline{\tau(L_2)} $ forms a closed interval $[0,2/3]$. The key of proof is to compute the thickness $ t(\overline{\tau(L_2)}) $ of the Cantor set. 
	
\end{rem}

\begin{Thm}\label{ne}(Newhouse's thickness theorem)
	Let \(C_1\) and \(C_2\) be two linked Cantor sets, i.e. the size of largest gap of \(C_1\) is not greater than the diameter of \(C_2\) and vice versa. If \( t(C_1) \cdot t(C_2) \geq 1\), then the arithmetic sum
	\[
	C_1 + C_2 = \{x + y : x \in C_1, y \in C_2\}
	\]
	is an interval.
\end{Thm}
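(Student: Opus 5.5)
The plan is to reduce the sumset statement to an intersection statement, and then prove the classical \emph{gap lemma}. Here the Cantor sets are compact, perfect and nowhere dense, with $C_i\subset I_i:=\operatorname{conv}(C_i)$. The thickness $t(C)$ is understood in Newhouse's sense. For a bounded gap $U$ of $C$ with endpoint $u$, the \emph{bridge} $B_u$ is the maximal closed interval with endpoint $u$ that contains no point of any gap of length $\geq |U|$. Then $t(C)=\inf_{U,u} |B_u|/|U|$.

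\textbf{Reduction.} Since $C_1+C_2\subseteq I_1+I_2$ trivially, it suffices to show that every $s\in I_1+I_2$ lies in $C_1+C_2$. Equivalently, we must show $C_1\cap(s-C_2)\neq\varnothing$. The set $s-C_2$ is a Cantor set with the same thickness and the same gap sizes as $C_2$, and $s\in I_1+I_2$ means exactly that the hulls $I_1$ and $s-I_2$ intersect. Using the hypothesis that the two sets are linked, I will check that neither of $C_1$, $s-C_2$ lies in a gap of the other (the \emph{interleaving} condition).
\begin{itemize}
\item Neither hull lies in an unbounded gap of the other, because the hulls meet.
\item If $s-I_2$ lay inside a bounded gap $U$ of $C_1$, then $\operatorname{diam} C_2<|U|\leq$ (largest gap of $C_1$), contradicting linkedness. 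The symmetric case is identical.
\end{itemize}

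\textbf{Gap lemma.} Let $K_1,K_2$ be interleaved Cantor sets with $t(K_1)t(K_2)\geq 1$; I claim $K_1\cap K_2\neq\varnothing$. Suppose not. Call a gap $U_1$ of $K_1$ and a gap $U_2$ of $K_2$ \emph{linked} if each contains exactly one endpoint of the other. First I produce an initial linked pair. Take the unbounded gaps (or the complementary components on the appropriate side); interleaving together with disjointness forces some endpoint configuration that yields such a pair.

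Given a linked pair $(U_1,U_2)$ with, say, $u_1\in U_2$ the relevant endpoint of $U_1$, look at its bridge $B_{u_1}$ and at the bridge $B_{u_2}$ of the endpoint $u_2$ of $U_2$ lying in $U_1$. The inequality $|B_{u_1}||B_{u_2}|\geq t(K_1)t(K_2)|U_1||U_2|\geq |U_1||U_2|$ forces $|B_{u_1}|\geq|U_2|$ or $|B_{u_2}|\geq|U_1|$. In the first case $B_{u_1}$ sticks out of $U_2$, so it contains the other endpoint $v_2$ of $U_2$. Since $v_2\in K_2$ and $K_1\cap K_2=\varnothing$, $v_2$ lies in a gap $U_1'$ of $K_1$ inside $B_{u_1}$, hence $|U_1'|<|U_1|$. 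I then check that $(U_1',U_2)$ is again linked. The second case is symmetric.

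Iterating produces a sequence of linked pairs in which one gap is replaced by a strictly smaller one at each step. A gap can be kept only finitely often, since each $K_i$ has only finitely many gaps of length above any threshold; so infinitely many replacements occur on each side and both gap lengths tend to $0$. The linked pairs therefore converge to a common point $x$. Since $x$ is a limit of endpoints of gaps of $K_1$ and of gaps of $K_2$, closedness gives $x\in K_1\cap K_2$, a contradiction.

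\textbf{Main obstacle.} I expect the combinatorial bookkeeping of the iteration to be the delicate part. Two points need care: verifying that the new pair is genuinely linked (each contains exactly one endpoint of the other), and arranging the initial step when the starting gaps are unbounded, where bridges must be taken relative to the hull. I would handle the initial step by treating the complementary half-lines of $I_1$ and $s-I_2$ as gaps of infinite length whose bridges are the whole hull. Then the first step of the iteration applies verbatim, using that each hull meets the other set's hull outside the other's gaps.
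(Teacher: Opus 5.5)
The paper gives no proof of this statement. It is quoted from Newhouse and used only as a black box in the closing remark, so there is no internal argument to compare with. Your route is the standard one from the literature: reduce $s\in I_1+I_2$ to $C_1\cap(s-C_2)\neq\varnothing$, check interleaving from the linkedness hypothesis, and prove the gap lemma by iterating linked pairs of gaps. The reduction is correct, and so is the single iteration step. It genuinely works with $t(C_1)t(C_2)\geq 1$ rather than only $>1$: bridges exclude gaps of length $\geq |U|$, so $|B_{u_1}|\geq |U_2|$ already places $v_2$ strictly inside $B_{u_1}$.

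Three points in the iteration need tightening, though none is fatal.

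\emph{Why a gap is kept only finitely often.} Finiteness of gaps above a threshold does not show this; it only gives that strictly decreasing gap lengths tend to $0$ once both sides are replaced infinitely often. The actual reason is disjointness. Every gap of $K_1$ linked with a fixed $U_2$ contains an endpoint of $U_2$, so at most two distinct gaps of $K_1$ are linked with $U_2$. In fact, after the first step the sides must alternate. The bridge of the endpoint of $U_1'$ lying in $U_2$ cannot reach $u_2\in U_1$, because $|U_1|>|U_1'|$.

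\emph{Convergence of the pairs.} ``The linked pairs therefore converge'' does not follow from the lengths tending to $0$ alone; use compactness instead. Choose a subsequence along which an endpoint $x_n\in K_1$ of $U_1^{(n)}$ converges to some $x$. The endpoints of the linked gap $U_2^{(n)}$ lie in $K_2$ within distance $|U_1^{(n)}|+|U_2^{(n)}|\to 0$ of $x_n$, so $x\in K_1\cap K_2$. Alternatively, note that after the first step all endpoints move monotonically in one direction.

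\emph{The initial step.} This step is not ``verbatim'' with infinite gap lengths, since the product inequality is then vacuous; the case is forced by interleaving instead. Suppose, say, $\min K_1<c:=\min K_2$. Then $c$ lies in a bounded gap $(x,y)$ of $K_1$, because $K_2$ is not contained in the right unbounded gap of $K_1$. Next, $\max K_2>y$ because $K_2\not\subset(x,y)$. Hence $y$ lies in a bounded gap $(p,q)$ of $K_2$ with $x<c\leq p<y<q$. This is a linked pair of bounded gaps, and from there your step applies.

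With these repairs your argument is a complete proof.
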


 \section*{Acknowledgements}
        The authors want to thank Prof. Hidekazu Furusho, Dr. Jianbo Sun,  Prof. Qingchun Tian and  Prof. Yufeng Wu for helpful comments.     This project is  supported by the National Natural Science Foundation of China (Grant No. 12571009) and the Natural Science Foundation of Hunan Province,
China (Grant No. 2026JJ40003).

\end{document}